\newcommand\EX{\mathbb{E}}
\newcommand{\be}{\begin{equation}}
\newcommand{\ee}{\end{equation}}
\numberwithin{equation}{section}
\newtheorem{theorem}{Theorem}
\newtheorem{lemma}{Lemma}
\newtheorem{corollary}{Corollary}
\newtheorem{remark}{Remark}
\providecommand{\keywords}[1]{\textbf{\textit{Keywords:}} #1}
\title{\bf \Large A convergence  study of SGD-type methods for stochastic optimization}
\author{Tiannan Xiao,~Guoguo Yang}
\affil{\small LMAM and School of Mathematical Sciences, Peking University, Beijing 100871, China }
\affil{alxeusxiao@pku.edu.cn,ygj512@hotmail.com}
\date{}
\begin{document}
\maketitle
\begin{abstract}
In this paper, we first reinvestigate  the convergence of vanilla SGD method in the sense of $L^2$  under  more general learning rates conditions and a more general convex assumption,  which relieves the conditions on learning rates and  do not need the problem to be strongly convex.
Then, by taking advantage of the Lyapunov function technique, we present the convergence of the momentum SGD and Nesterov accelerated SGD methods for the  convex and  non-convex problem under $L$-smooth assumption that extends the bounded gradient limitation to a certain extent.
 The convergence of time averaged SGD was also analyzed.
\end{abstract}

\keywords{SGD, Momentum SGD, Nesterov acceleration, Time averaged SGD, Convergence analysis, Nonconvex}

\section{Introduction}
\label{sec1} In this article, we study  the convergence analysis of stochastic gradient descent (SGD) type methods to  the optimization problem
\begin{equation}\label{eq:opt1}
\begin{split}
\min_{x\in \mathbb{R}^d} f(x) := \frac{1}{S}\sum_{i=1}^S f_i(x),
\end{split}
\end{equation}
where $f, f_i:\mathbb{R}^d \rightarrow \mathbb{R}$ are continuously differentiable functions and $S$ is the number of samples in machine learning. Recently, stochastic gradient descent (SGD) has played a significant role in training machine learning models when $S$ is very large and $x$ has many components. The SGD is derived from gradient descent by replacing $\nabla f$ with $\nabla f_{s_k}$, where $s_k$ is a random variable uniformly sampled from $\{1,2,\ldots,S\}$. The iterative format is often read as
{\begin{equation}\label{eq:SGD1}
\begin{split}
x_{k} & = x_{k-1} - \alpha_k \nabla f_{s_k}(x_{k-1})\\
 & = x_{k-1} - \alpha_k \nabla f(x_{k-1})+\alpha_k\xi_k,
\end{split}
\end{equation}}
where $\alpha_k$ is the learning rate, which satisfies the assumption (Divergence condition):
\begin{equation}\label{Assum1:alpha}
\lim_{k\rightarrow \infty}\alpha_k=0,\quad\sum_{k=1}^\infty \alpha_k=\infty.
\end{equation}
{In \eqref{eq:SGD1}, the term $\xi_k=\nabla f(x_{k-1})-\nabla f_{s_k}(x_{k-1})$.
Let $\mathcal{F}_k = \sigma(x_0,\xi_1,\xi_2,\cdots,\xi_k)$ be the filtration generated by $(x_0,\xi_1,\ldots, \xi_k)$,
thus $\xi_k$  satisfies  $\mathbb{E} [\xi_k | \mathcal{F}_{k-1}] = 0$.}

 For iterative format (\ref{eq:SGD1}), it has a mini-batch SGD \cite{Goodfellow2016}  variant, which utilises $\frac{1}{m} \sum_{i=1}^m\nabla f_{s_{k_i}}(x_k)$ to estimate gradient, where $s_{k_i}$ are i.i.d random variables uniformly sampled from $\{1,2,\ldots,S\}$ and the noise term $\xi_k = \nabla f(x_{k-1}) - \frac{1}{m} \sum_{i=1}^m\nabla f_{s_{k_i}}(x_k).$  For convenience, we will choose sample count $m=1$ in this paper, and the results of this paper are consistent for cases where $m>1$.

Many elegant works  have been done on the forms of generalization and theoretical analysis of SGD-type methods \cite{nguyen2018sgd,bottou2012stochastic,li2022revisiting,MGDC}. Here,
the general Markovian iteration forms of SGD-type methods are  denoted as

$~~~~\bullet$ vanilla SGD (vSGD)
\begin{equation}
 x_{k} = x_{k-1} - \alpha_k F(x_{k-1},\xi_k),  \label{eq:vSGD}
\end{equation}

$~~~~\bullet$ momentum SGD (mSGD)  \cite{1964Some}
\begin{equation}\label{eq:mSGD}
\begin{split}
x_{k} & = x_{k-1} +v_k, ~v_k = \beta_k v_{k-1} - \alpha_k F(x_{k-1},\xi_k),
\end{split}
\end{equation}

$~~~~\bullet$  Nesterov accelerated form  (NaSGD) \cite{1983A}
\begin{equation}\label{eq:NaSGD}
\begin{split}
y_k & =  x_k+ \beta_k (x_k -  x_{k-1}), ~x_{k}  = y_{k-1} - \alpha_k F(y_{k-1},\xi_k),
\end{split}
\end{equation}
respectively.
Here $\EX[ F(x_{k-1}, \xi_k)|\mathcal{F}_{k-1}]=\nabla f(x_{k-1})$  and $\beta_k\in [0,1)$ in (\ref{eq:mSGD}) and (\ref{eq:NaSGD}).  For the above mentioned SGD-type methods, we assume  the noise term $\{\xi_k\}$ satisfy  the following conditional mean and covariance conditions
\begin{equation}\label{Assum:StatXi}
\mathbb{E}[\xi_k|\mathcal{F}_{k-1}] = 0,\quad \mathbb{E} [\| \xi_k \|^2|\mathcal{F}_{k-1} ] \le M + V\| \nabla f(x_{k-1})\|^2,
\end{equation}
which covers the usual incremental SGD  \cite{Bertsekas00} and $\mathbb{E}|f(x_0)| < \infty$ for the initial value $x_0$ in this paper.

In recent years, vSGD \eqref{eq:vSGD} has attracted an increasing number of researchers.  Bertsekas and Tsitsiklis \cite{Bertsekas00}  proved that $\{x_k\}$ converges almost surely to a critical point of $f$  when the stepsize $\{\alpha_k\}$ satisfies $\sum \alpha_k = \infty$ and $\sum \alpha_k^2 < \infty$  even for non-convex problems. Ghadimi and Guanghui \cite{SGDZO} analyzed the complexity of $\{x_k\}$ to approximate stationary point of a nonlinear problem and showed that this method is non-asymptotically convergent with $\min_{k \le n}\mathbb{E} \| \nabla f(x_k)\|^2 = O(1/\sqrt{n})$  if the problem is  non-convex.  By using the variance reduction technique,  Reddi et al. \cite{NCSGDP1} showed that the convergence rate can be improved to $O(1/n)$.
For strongly convex {problems}, this convergence rate can be improved to $\mathbb{E}\| \nabla f(x_n)\|^2 = O(1/n)$  when the step size $\alpha_k=O(1/k)$
\cite{BLN,SGDASGD,li2022revisiting,JMLR:v20:18-759}.

 Compared with SGD method,  there are relatively few references about the mSGD and NaSGD.   Barakat and Bianchi \cite{MGDC} presented a novel first order convergence rate result  of a general class mSGD.
Liu, Gao, and Yin  \cite{SGDB} established  the stationary convergence bound of this time averaged mSGD when the step sizes are  constant.
For Nesterov acceleration gradient,  Su, Boyd, and  Cand{{\`e}}s \cite{NAG1} showed that the convergence rate of $f(x_n)$ towards $f(x^*)$ is $O(n^{-2})$ in {the} deterministic case when $f$ is convex.  Assran and Rabbat  \cite{NASGDB} studied the stationary convergence bound of NaSGD with constant step size.

Besides the aforementioned convergence analysis, the CLT for the SGD-type methods was  studied in \cite{li2022revisiting} under more general divergence condition (\ref{Assum1:alpha}). It is a natural question to study the convergence of SGD-type methods under condition (\ref{Assum1:alpha}) and whether we can generalize this condition. There are few researches on the convergence of SGD-type methods under this condition. This is a motivation for this work, to give a further  investigation  on the convergence analysis for the SGD-type methods. We will consider the convergence analysis of the vSGD \eqref{eq:vSGD}, NaSGD \eqref{eq:NaSGD}, and the mSGD as follows
\begin{equation}\label{eq:mSGD2}
\begin{split}
x_{k} & = x_{k-1} + \alpha_k v_k,~v_k = (1 - \beta_k) v_{k-1} - \alpha_k F(x_{k-1},\xi_k),
\end{split}
\end{equation}
where $\beta_k=\mu_k\alpha_k$ and $\mu_k>0$ is the damping parameter. We consider the form \eqref{eq:mSGD2} instead of \eqref{eq:mSGD} mainly because it has a better connection to the continuous time limit.

The main predominant contributions of this paper are as follows.

$~~~~\bullet$  \textit{Convergence of vSGD.} We investigate the convergent analysis of vSGD with different {setups} on
the assumptions on noise and step size.
Compared with the assumptions of the previous article regarding the step size $\{\alpha_k\}$,  such as $\sum \alpha_k = \infty$ and $\sum \alpha_k^2 < \infty$ \cite{Bertsekas00} for convex problem, we  relieve the conditions on $\{\alpha_k\}$ and  do not need $f(x)$ to be strongly convex.

$~~~~\bullet$ \textit{Convergence of  mSGD and NaSGD.} For mSGD,
this part is classified into two cases: the case with constant damping $\mu_k\equiv\tilde{\mu}$, or the case with vanishing damping $\mu_k\rightarrow 0$.    Taking advantage of the Lyapunov function technique, we can show the convergent results for both cases.  Unlike the previous convergence analysis \cite{gitman2019understanding}, which requires the gradient to be bounded, we assume that $f(x)$ in (\ref{eq:opt1})  is $L$-smooth:   there exists $L > 0$, such that
\begin{equation}\label{eq:L-smooth}
\forall x,y\in \mathbb{R}^d, \quad \|\nabla f(x) - \nabla f(y) \| \le L \| x - y \|,
\end{equation}
 which extends the bounded gradient  limitation to a certain extent.
 For NaSGD, its stationary convergence bound was studied in \cite{NASGDB} when the learning rates are constant, which limits its application in practical problems.  Here, we show the convergent results under $L$-smooth (\ref{eq:L-smooth}) and more general learning rates conditions.

$~~~~\bullet$  \textit{Convergence of the time average $x_k$.}  Similar to {\cite{ASSGD, li2022revisiting}},  we consider the time averaged SGD
\begin{equation} \label{eq:aSGD}
\bar{x}_n = \frac{\sum_{k=1}^n \alpha_k x_{k-1}}{\sum_{k=1}^n \alpha_k},
\end{equation}
which is an analogy of the continuous form $\int_0^T x(t)dt /T$, where $T$ is the summation of step size. {Compared with the analysis in \cite{gitman2019understanding}, we extend the bounded gradient  limitation of  $f(x)$ to $L$-smooth (\ref{eq:L-smooth}).}

The rest of this paper is organized as follows. We will prove the convergence of the vSGD, mSGD and NaSGD, and the average $\bar{x}_n$ in Sections \ref{sec:sgd}, \ref{sec:msgd} and \ref{sec:average},  respectively. Finally, we make the conclusion.  In the remainder of this paper, we will use $C$ as a  $O(1)$  positive constant in different estimates, the value of which may vary in different places.

\section{Convergence for vSGD}\label{sec:sgd}
In this section, we will give the convergence analysis of vSGD,  which will relieve  the constraint on step size $\{\alpha_k\}$ in \cite{Bertsekas00} by a more general convex assumption.
The SGD convergence of  non-convex function $f$ is already known. In \cite{Bertsekas00}, it proves that convergence in probability $1$ can be achieved in the sense of the following two limits (\ref{limit1}) and (\ref{limit2}). The more general conclusion we give here will also be encountered in  mSGD in the next section.
It is worth noting that  proving the weak convergence  (\ref{limit1}) does not require the condition $\sum_{k=1}^\infty \alpha_k^2<\infty$.
If some restrictions are imposed on $\{\xi_k \}$, the convergence can be strengthened, for example, variance reduction SGD \cite{SVRG}, SARAH \cite{SARAH} etc. These variants can get exponential convergence for strongly convex function as  Gradient Descent and $O(1/t)$ for non-convex function \cite{2019Finite}.

To obtain the convergent theorems, we first give the following lemmas. The results of (a) and (b) in Lemma \ref{lm:3sq} are similar to those of Lemma 1 in reference \cite{Bertsekas00}. The results of (c) is to prove the convergence of mSGD later.
\begin{lemma}[Sequence convergence]\label{lm:3sq}
Assume the step size $\{\alpha_k\}$  satisfy (\ref{Assum1:alpha}), the positive sequence $Z_k \rightarrow 0$ and $X_k$ has a lower bound $X$.  Consider the triplet $\{X_k,Y_k,Z_k\}_k$  with the relation
\begin{equation*}
\begin{split}
X_k \le X_{k-1} - \alpha_k Y_k + \alpha_k Z_k,~~~~Y_k \ge 0.
\end{split}
\end{equation*}
We have: (a) $\lim\inf _{k\rightarrow \infty}Y_k = 0,$ and there exists $K>0$ such that $\sum_{k=1}^n \alpha_k Y_k \le  K + \sum_{k=1}^n \alpha_k Z_k.$  (b) If $\sum \alpha_k Z_k < +\infty$, then $X_n$ is convergent. (c) Let $\{Y_{n_k} \}$ and $\{X_{n_k} \}$ be subsequences of $\{Y_{n} \}$  and $\{X_{n} \}$, respectively. If $Y_{n_k} \rightarrow 0$ is a sufficient condition  for $X_{n_k}\rightarrow X$, then $\lim_{n \rightarrow \infty} X_n = X.$

\end{lemma}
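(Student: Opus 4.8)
The plan is to extract everything from the single scalar recursion by telescoping, handling the three parts in order. For (a), I would sum the defining inequality from $1$ to $n$ and use $X_n\ge 0$ to get $\sum_{k=1}^n \alpha_k Y_k \le X_0 + \sum_{k=1}^n \alpha_k Z_k$, which is the second assertion with $K=X_0$. The vanishing of $\liminf Y_k$ then follows by contradiction: if $\liminf_k Y_k = c>0$, then $Y_k\ge c/2$ for all large $k$, while $Z_k\to 0$ gives $Z_k\le c/4$ eventually, so $\sum_k \alpha_k(Y_k-Z_k)$ would be bounded below by a constant plus $\tfrac{c}{4}\sum_k\alpha_k$, which diverges by (\ref{Assum1:alpha}) and contradicts $\sum_k\alpha_k(Y_k-Z_k)\le X_0$.

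For (b), the idea is to absorb the now-summable forcing into the potential. I would set $R_n=\sum_{k>n}\alpha_k Z_k$, which is finite and tends to $0$ when $\sum\alpha_kZ_k<\infty$. Since $R_{n-1}-R_n=\alpha_nZ_n$, adding $R_n$ to the recursion yields $X_n+R_n\le (X_{n-1}+R_{n-1})-\alpha_nY_n\le X_{n-1}+R_{n-1}$. Thus $\{X_n+R_n\}$ is nonincreasing and bounded below by $0$, hence convergent; as $R_n\to0$, the sequence $X_n$ converges.

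Part (c) carries the real content. First I would restate the hypothesis uniformly: the statement that $Y_{n_k}\to 0$ forces $X_{n_k}\to X$ along every subsequence is equivalent (by extracting a bad subsequence otherwise) to the assertion that for each $\epsilon>0$ there are $\delta>0$ and $N$ with $|X_n-X|\ge\epsilon \Rightarrow Y_n\ge\delta$ for all $n\ge N$. By (a) there is a subsequence along which $Y\to 0$, hence along which $X\to X$, so $X$ is a subsequential limit of $\{X_n\}$ and it remains to prove $\limsup X_n\le X$ and $\liminf X_n\ge X$. The mechanism I would use for both is a ``no upward crossing of a forbidden band'' argument: fix an interval $I=[\ell,u]$ of positive width on which $|x-X|\ge\epsilon$; by the uniform form, $X_n\in I$ forces $Y_n\ge\delta$, and since $Z_n\to0$ this gives, for all large $n$, $X_n\le X_{n-1}-\tfrac{\delta}{2}\alpha_n<X_{n-1}$. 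Hence an iterate can enter $I$ only from above, never by an upward step; combined with the fact that any upward increment is at most $\alpha_nZ_n\to0$ (eventually smaller than the fixed width $u-\ell$), the sequence cannot pass from below $I$ to above $I$.

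Applying this with $I$ just above $X$, and using that $X$ is approached infinitely often, gives $\limsup X_n\le X$; applying it with $I$ just below $X$ gives $\liminf X_n\ge X$, whence $X_n\to X$. The main obstacle is exactly this crossing argument, since the recursion permits arbitrarily large downward jumps (whenever $Y_n$ is large), so naive monotonicity is unavailable; the resolution is that largeness of $Y_n$ is precisely what the contrapositive guarantees inside the band, converting ``far from $X$'' into ``forced strict descent,'' while the vanishing of $\alpha_nZ_n$ forbids jumping over the band. The delicate points are choosing band endpoints with positive width that strictly avoid $X$, and the index bookkeeping ensuring all the ``for large $n$'' thresholds ($Z_n<\delta/2$ and $\alpha_nZ_n<u-\ell$) are met along the crossing.
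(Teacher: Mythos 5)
Your proposal is correct and follows essentially the same route as the paper: parts (a) and (b) are the standard telescoping and tail-sum arguments (which the paper simply cites from Bertsekas--Tsitsiklis), and for part (c) your reformulation of the hypothesis as ``$|X_n-X|\ge\epsilon$ forces $Y_n\ge\delta$ for large $n$,'' combined with the forbidden-band upcrossing contradiction using $Z_n\le\delta/2$ and the bound $X_n-X_{n-1}\le\alpha_nZ_n$ on upward steps, is exactly the paper's argument. The only cosmetic difference is that the paper normalizes $X=0$ and uses $X_k\ge0$ to handle the lower side, while you run the band argument on both sides of $X$.
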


\begin{proof}  {For the proof of (b), it is a special case of  Lemma 1 in \cite{Bertsekas00}.} 

{
(a) It is easy to know that for $m < n, X_n \le X_{m-1} - \sum_{i=m}^n \alpha_i Y_i + \sum_{i=m}^n \alpha_i Z_i,$ so we have
\begin{equation*}
\begin{split}
\sum_{i=1}^n \alpha_i Y_i \le \sum_{i=1}^n \alpha_i Z_i + X_0 - X_n \le \sum_{i=1}^n \alpha_i Z_i + X_0 - X.
\end{split}
\end{equation*}
If $\lim_{k\rightarrow \infty}\sup Y_k > 0$, it means that there exists $\varepsilon > 0, ~K>0$ such that for all $ k \ge K, Y_k \ge \varepsilon$,
then there exists $ M \ge K,  \text{for}~ m > M, Z_m \le \varepsilon / 2$, and
$$X_n \le X_{m} - \sum_{i=m+1}^n \alpha_i Y_i + \sum_{i=m+1}^n \alpha_i Z_i \le X_{m} - \frac{\varepsilon}{2} \sum_{i=m+1}^n \alpha_i,  \text{~for~all~} n < m,$$ 
thus when $n \rightarrow +\infty$, $X_n \rightarrow - \infty,$  which leads to contradiction as $X_n$ has a lower bound.
}

{
(b) Since $X_n \le X_0 + \sum_{k=0}^{\infty} \alpha_k Z_k$, we get $\{X_n \}$ is bounded. From (a),  we know that for $\varepsilon > 0$, there exists $M >0$ such that
\begin{equation*}
\begin{split}
X_n \le X_m - \sum_{i=m}^n \alpha_i Y_i + \sum_{i=m}^n \alpha_i Z_i \le X_m + \varepsilon,  ~n \ge m > M.
\end{split}
\end{equation*}
If $X_n$ is not convergent, then assume $\lim \sup_k X_k = A_1, \lim \inf_k X_k = A_2$ and take $\varepsilon = (A_1 - A_2) / 3$ . Now we can find infinite $m$ fulfilled $X_m < A_2 + \varepsilon$ with $m > M$,  then there exists $n > m$ such that $X_{n} > A_1 - \varepsilon$.  Then $X_n > X_m + \varepsilon,$  which leads to contradiction.
}

(c) Without loss of generality, we set $X=0$.  From (a), we know $\lim \inf_k X_k = 0$.  If there exists $\varepsilon >0$ such that $\lim \sup_n X_n \geq \varepsilon$, we can find infinite  $k$ such that $X_k < \varepsilon / 2, X_{m_k} \ge \varepsilon,  X_i \in [\varepsilon / 2, \varepsilon),~k< i <m_k$  and there exists $\delta>0$, $Y_i \ge \delta.$  By $k \rightarrow +\infty, Z_k \le \delta / 2$, we have
\begin{equation*}
\begin{split}
\varepsilon \le X_{m_k} \le X_k - \sum_{i=k+1}^{m_k} \alpha_i Y_i + \sum_{i=k+1}^{m_k} \alpha_i Z_i \le X_k - \sum_{i=k+1}^{m_k} \alpha_i \delta / 2 < \varepsilon / 2,
\end{split}
\end{equation*}
which {leads to contradiction}.
\end{proof}

{
\begin{lemma}\label{lm:supplement}
Assume the step size $\{\alpha_k\}$  satisfy (\ref{Assum1:alpha}),  and its partial sums is $S_n=\sum_{k=1}^n \alpha_k$, then we have 
$$\sum_{k=1}^n \alpha_k / S_k \rightarrow+\infty~as~n\rightarrow+\infty.$$
\end{lemma}
\begin{proof} 
For $\forall m, n>0$, it is easy to get 
$$\sum_{k=m}^n \frac{\alpha_k}{S_k} \geq \frac{S_n-S_m}{S_n}=1-\frac{S_m}{S_n}.$$
Due to $S_n \rightarrow+\infty$, for a given $m$, there exists $n(m)$ and  $\delta\in (0,1)$, such that $S_m / S_n<1-\delta$. Letting $m_k=n\left(m_{k-1}\right)+1$,  we obtain 
$$\sum_{k=1}^{m_K} \frac{\alpha_k}{S_k} \geq \sum_{k=1}^K\left(1-\frac{S_{m_k}}{S_{n\left(m_k\right)}}\right) \geq K \delta \rightarrow+\infty.$$
\end{proof}}

Similar to \cite{Bertsekas00}, we give the convergence result of vSGD below. The difference is that we consider $L^2$ convergence instead of convergence with probability one. The reason why we consider $L^2$ convergence here is that it is more convenient to estimate the convergence rate as  \cite{SGDZO}.

\begin{theorem}\label{SGD:thm1}
{ If the function $f(x)$ is  $L$-smooth and has a lower bound, and the assumptions (\ref{Assum1:alpha}), (\ref{Assum:StatXi}) hold}, then for vSGD we have
\begin{equation}\label{limit1}
\underset{n\rightarrow +\infty}{\lim \inf} \ \mathbb{E}\lVert \nabla f(x_{n})\rVert^2 = 0.
\end{equation}
Furthermore, if $\underset{n\rightarrow \infty}{\lim}\sum_{k=1}^n \alpha_k^2 <\infty$ ,  we have
\begin{equation}\label{limit2}
\underset{n\rightarrow +\infty}{\lim} \ \mathbb{E}\lVert \nabla f(x_{n})\rVert^2 = 0.
\end{equation}
\end{theorem}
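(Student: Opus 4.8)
The plan is to run the standard $L$-smooth descent estimate on $\mathbb{E} f(x_k)$ and feed the resulting recursion into Lemma \ref{lm:3sq}. Writing $F(x_{k-1},\xi_k)=\nabla f(x_{k-1})-\xi_k$ with $\mathbb{E}[\xi_k\mid\mathcal F_{k-1}]=0$, the $L$-smoothness inequality (\ref{eq:L-smooth}) applied to $x_k=x_{k-1}-\alpha_k F(x_{k-1},\xi_k)$ gives, after taking $\mathbb{E}[\cdot\mid\mathcal F_{k-1}]$ and using (\ref{Assum:StatXi}) in the form $\mathbb{E}[\|F(x_{k-1},\xi_k)\|^2\mid\mathcal F_{k-1}]\le(1+V)\|\nabla f(x_{k-1})\|^2+M$,
\begin{equation*}
\mathbb{E}[f(x_k)\mid\mathcal F_{k-1}]\le f(x_{k-1})-\alpha_k\Big(1-\tfrac{L(1+V)}{2}\alpha_k\Big)\|\nabla f(x_{k-1})\|^2+\tfrac{LM}{2}\alpha_k^2.
\end{equation*}
Since $\alpha_k\to0$, there is $k_0$ with $1-\tfrac{L(1+V)}{2}\alpha_k\ge\tfrac12$ for $k\ge k_0$. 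Taking full expectations and setting $X_k=\mathbb{E} f(x_k)-\inf f\ge0$, $Y_k=\tfrac12\mathbb{E}\|\nabla f(x_{k-1})\|^2\ge0$, and $Z_k=\tfrac{LM}{2}\alpha_k\to0$, the chain becomes $X_k\le X_{k-1}-\alpha_k Y_k+\alpha_k Z_k$ for $k\ge k_0$, which is exactly the hypothesis of Lemma \ref{lm:3sq}.

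\textbf{First limit.} Part (a) of Lemma \ref{lm:3sq} yields $\liminf_k Y_k=0$, i.e. $\liminf_k \mathbb{E}\|\nabla f(x_{k-1})\|^2=0$, which is (\ref{limit1}) after reindexing. Note that $\sum\alpha_k^2<\infty$ is not needed here, consistent with the remark preceding the statement.

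\textbf{Second limit.} Under $\sum\alpha_k^2<\infty$ we have $\sum\alpha_k Z_k=\tfrac{LM}{2}\sum\alpha_k^2<\infty$, so part (b) of Lemma \ref{lm:3sq} shows $X_n$ converges, and the second assertion of part (a) gives $\sum_k\alpha_k\mathbb{E}\|\nabla f(x_{k-1})\|^2<\infty$. It remains to upgrade the $\liminf$ to a genuine limit; this is the main obstacle, since summability against $\alpha_k$ together with $\sum\alpha_k=\infty$ does not by itself preclude rare tall spikes of $\mathbb{E}\|\nabla f(x_k)\|^2$. Two ingredients remove them. First, the descent inequality $\inf f\le f(x-\tfrac1L\nabla f(x))$ yields the Polyak-type bound $\|\nabla f(x)\|^2\le 2L(f(x)-\inf f)$, hence $\mathbb{E}\|\nabla f(x_k)\|^2\le 2LX_k$ stays bounded along the convergent sequence $X_k$. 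Second, $\|\nabla f(x_k)\|\le\|\nabla f(x_{k-1})\|+L\alpha_k\|F(x_{k-1},\xi_k)\|$ combined with (\ref{Assum:StatXi}) and this boundedness gives an increment bound of the form $\mathbb{E}\|\nabla f(x_k)\|^2-\mathbb{E}\|\nabla f(x_{k-1})\|^2\le C\alpha_k$ for all large $k$.

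Finally I would run an up-crossing argument in the spirit of the proof of part (c) of Lemma \ref{lm:3sq}. If $\limsup_k \mathbb{E}\|\nabla f(x_k)\|^2\ge2\varepsilon>0$ while the $\liminf$ is $0$, then there are infinitely many disjoint index blocks $(p,q]$ on which $\mathbb{E}\|\nabla f\|^2$ rises from below $\varepsilon$ to at least $2\varepsilon$; the increment bound forces $\sum_{i\in(p,q]}\alpha_i\ge\varepsilon/C$, so each block contributes at least $\varepsilon\cdot(\varepsilon/C)$ to $\sum_k\alpha_k\mathbb{E}\|\nabla f(x_{k-1})\|^2$, contradicting its finiteness. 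Hence $\limsup_k \mathbb{E}\|\nabla f(x_k)\|^2=0$ and (\ref{limit2}) follows. I expect the bookkeeping in this last step (defining the blocks and handling the reindex between $\mathbb{E}\|\nabla f(x_k)\|^2$ and $\mathbb{E}\|\nabla f(x_{k-1})\|^2$) to be the only delicate part; everything upstream is routine once the recursion is matched to Lemma \ref{lm:3sq}.
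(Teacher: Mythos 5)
Your proposal is correct and follows essentially the same route as the paper: derive the $L$-smooth descent recursion (\ref{ref:sgditeration}), feed it into Lemma \ref{lm:3sq} for the $\liminf$ statement, and then rule out persistent oscillation of $\mathbb{E}\|\nabla f(x_k)\|^2$ via an up-crossing argument when $\sum_k\alpha_k^2<\infty$. The paper omits this last step with a pointer to Bertsekas--Tsitsiklis, and your filled-in version (boundedness via the Polyak-type bound, the $C\alpha_k$ increment estimate, and the block-counting contradiction with $\sum_k\alpha_k\mathbb{E}\|\nabla f(x_{k-1})\|^2<\infty$) matches both that reference and the analogous argument the paper spells out in its mSGD theorem.
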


\begin{proof}
If $f$ is $L$-smooth, it is easy to get
\begin{equation}\label{eq:th1}
\forall x,y\in \mathbb{R}^d, \quad f(x) \le f(y) + \nabla f(y)^T(x-y) + \frac{L}{2}\| x - y \|^2.
\end{equation}
From (\ref{eq:th1}), (\ref{Assum:StatXi}) and $\mathbb{E} [\xi_k | x_{k-1}] = 0$, it is easy to know that
\begin{equation*}
\begin{split}
\mathbb{E}[f(x_k)|\mathcal{F}_{k-1}] \le &\ f(x_{k-1}) - \alpha_k \| \nabla f(x_{k-1}) \|^2  + \frac{L\alpha_k^2}{2} (||\nabla f(x_{k-1})||^2 + \mathbb{E}[\| \xi_k \|^2|\mathcal{F}_{k-1}]) \\
& \le f(x_{k-1}) - C \alpha_k  \| \nabla f(x_{k-1}) \|^2 + \frac{LM}{2} \alpha_k^2,
\end{split}
\end{equation*}
then we have
\begin{equation} \label{ref:sgditeration}
\begin{split}
\mathbb{E}f(x_k) \le \mathbb{E} f(x_{k-1}) - C \alpha_k \mathbb{E}\lVert \nabla f(x_{k-1}) \rVert^2 + \frac{LM}{2}\alpha_k^2.
\end{split}
\end{equation}

{
Taking  $(X_k,Y_k,Z_k) = (\mathbb{E}f(x_k), C\mathbb{E}\lVert \nabla f(x_{k-1}) \rVert^2, {LM}\alpha_k / 2)$ in Lemma \ref{lm:3sq} (a), we get
\begin{equation*}
\underset{k\rightarrow +\infty}{\lim\inf} \  \mathbb{E} \lVert  \nabla f(x_{k}) \rVert^2 = 0.
\end{equation*}}
{
For $\sum \alpha_k ^2/ 2 < +\infty,$ then by Lemma \ref{lm:3sq} (b), we get $\{ \mathbb{E}f(x_k) \}$ is convergent. If there exists $\varepsilon > 0$, such that $\lim\sup Y_k \ge \varepsilon$, that means we can find infinite $k$ fulfilled $Y_k \le \varepsilon / 4,  Y_{m_k} \ge \varepsilon,  Y_i \in [\varepsilon / 4,\varepsilon], i\in (k,m_k)$, we get
\begin{equation*}
\begin{split}
\mathbb{E}f(x_{m_k}) & \le \mathbb{E}f(x_{k}) + \sum_{i = k+1}^{m_k}\alpha_k Z_k -  K\sum_{i = k+1}^{m_k}\alpha_k Y_k
\end{split}
\end{equation*}
with $k \rightarrow +\infty$, then we have
\begin{equation*}
\begin{split}
\sum_{i = k}^{m_k}\alpha_k\varepsilon/4 \le \sum_{i = k+1}^{m_k}\alpha_k(\mathbb{E}\lVert  \nabla f(x_{k-1}) \rVert^2) \rightarrow 0.
\end{split}
\end{equation*}
With $L$-smooth condition, we obtain
\begin{equation*}
\begin{split}
\mathbb{E} (\lVert \nabla f(x_k) - \nabla f(x_{k-1}) \rVert^2) & \le L \alpha_k^2 \mathbb{E}\lVert  \nabla f(x_{k-1}) - \xi_k \rVert^2 \le C \alpha_k^2 (Y_{k} + 1) \le C \alpha_k^2.
\end{split}
\end{equation*}
By Minkowski inequality,  we have
\begin{equation*}
\begin{split}
\sqrt{\varepsilon}/2 \le Y_k^{1/2} - Y_{m_k}^{1/2} \le C\sum_{i = m+1}^{m_k}\alpha_k \rightarrow 0,
\end{split}
\end{equation*}
which leads to contradiction and the proof of the second part is completed.}

\end{proof}

The following theorem give the convergence result of vSGD when $f(x)$ is convex.

\begin{theorem}
{Assume $f(x)$ is convex and $L$-smooth,  and the assumptions (\ref{Assum1:alpha}), (\ref{Assum:StatXi}) hold.} If $f(x)$ has a lower bound and $\alpha_n \sum_{k=1}^n \alpha_k^2 \rightarrow 0$, then we have
$$
\underset{n\rightarrow +\infty}{\lim} \ \mathbb{E} f(x_n) = f(x^*) = f^*,
$$
where $x^*$  is the minima.
\end{theorem}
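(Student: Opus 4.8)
The plan is to run two coupled Lyapunov sequences---the function gap $\mathbb{E}f(x_n)$ and the squared distance $\mathbb{E}\|x_n-x^*\|^2$---and to feed both into Lemma \ref{lm:3sq}. After subtracting the minimum we may assume $f^*=0$, so that $f\ge 0$ and the descent estimate \eqref{ref:sgditeration} reads $\mathbb{E}f(x_k)\le\mathbb{E}f(x_{k-1})-C\alpha_k\mathbb{E}\|\nabla f(x_{k-1})\|^2+\tfrac{LM}{2}\alpha_k^2$. This already has the form required by Lemma \ref{lm:3sq} with $X_k=\mathbb{E}f(x_k)$, $Y_k=C\mathbb{E}\|\nabla f(x_{k-1})\|^2$ and $Z_k=\tfrac{LM}{2}\alpha_k\to0$, so part (a) gives $\liminf_n\mathbb{E}\|\nabla f(x_n)\|^2=0$ (this is \eqref{limit1}). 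The entire difficulty is to upgrade this $\liminf$ into a genuine limit for $\mathbb{E}f(x_n)$, and part (c) is exactly the device designed for that.

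The second ingredient I would extract is the standard consequence of $L$-smoothness, $\|\nabla f(x)\|^2\le 2L\big(f(x)-f^*\big)$, obtained by minimizing the quadratic upper bound \eqref{eq:th1} over $x$. Summing the descent estimate crudely gives $\mathbb{E}f(x_n)\le\mathbb{E}f(x_0)+\tfrac{LM}{2}\sum_{k=1}^n\alpha_k^2$, hence $\mathbb{E}\|\nabla f(x_{n-1})\|^2\le 2L\,\mathbb{E}f(x_{n-1})=O\big(1+\sum_{k=1}^n\alpha_k^2\big)$. Multiplying by $\alpha_n$ and invoking the hypothesis $\alpha_n\sum_{k=1}^n\alpha_k^2\to0$ together with $\alpha_n\to0$ yields the crucial fact
\begin{equation*}
\alpha_n\,\mathbb{E}\|\nabla f(x_{n-1})\|^2\longrightarrow 0 .
\end{equation*}
This is precisely the role of the assumption $\alpha_n\sum_{k=1}^n\alpha_k^2\to0$: it controls the most recent gradient contribution even though $\sum\alpha_k^2$ is permitted to diverge.

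Next I would write the distance recursion. Expanding $\|x_k-x^*\|^2$, taking conditional expectations, using $\mathbb{E}[F(x_{k-1},\xi_k)|\mathcal F_{k-1}]=\nabla f(x_{k-1})$ and the noise bound \eqref{Assum:StatXi}, and finally the convexity inequality $\nabla f(x_{k-1})^{\TT}(x_{k-1}-x^*)\ge f(x_{k-1})-f^*$, gives
\begin{equation*}
\mathbb{E}\|x_k-x^*\|^2\le\mathbb{E}\|x_{k-1}-x^*\|^2-2\alpha_k\mathbb{E}f(x_{k-1})+\alpha_k\Big[(1+V)\alpha_k\mathbb{E}\|\nabla f(x_{k-1})\|^2+M\alpha_k\Big].
\end{equation*}
By the previous paragraph the bracketed factor is a sequence $Z_k\to0$, so this recursion again fits Lemma \ref{lm:3sq} with $X_k=\mathbb{E}\|x_k-x^*\|^2\ge0$ and $Y_k=2\mathbb{E}f(x_{k-1})\ge0$. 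I would then apply part (c) to the descent recursion to force $\mathbb{E}f(x_n)\to0$: it suffices to verify the sufficient condition that along any subsequence with $\mathbb{E}\|\nabla f(x_{n_k-1})\|^2\to0$ one has $\mathbb{E}f(x_{n_k})\to0$. Convexity and Cauchy--Schwarz give $\mathbb{E}f(x_{n_k-1})\le\sqrt{\mathbb{E}\|\nabla f(x_{n_k-1})\|^2}\,\sqrt{\mathbb{E}\|x_{n_k-1}-x^*\|^2}$, and one further use of the descent estimate passes from index $n_k-1$ to $n_k$.

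The main obstacle is therefore the control of $\mathbb{E}\|x_n-x^*\|^2$ needed to close that Cauchy--Schwarz step. Because $\sum_k\alpha_k^2$ may diverge, the distance sequence is not obviously bounded: summing the distance recursion and discarding the favorable drift yields only $\mathbb{E}\|x_n-x^*\|^2=O\big(\sum_{k\le n}\alpha_k^2\big)$, which can grow. The delicate point is to exploit the negative drift $-2\alpha_k\mathbb{E}f(x_{k-1})$ together with $\liminf_n\mathbb{E}f(x_n)=0$---and the self-correcting structure whereby $\mathbb{E}\|x_k-x^*\|^2$ can only increase when $\mathbb{E}f(x_{k-1})$ is already small---to keep $\mathbb{E}\|x_n-x^*\|^2$ under control along the relevant subsequences, so that the hypothesis of Lemma \ref{lm:3sq}(c) is genuinely met. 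I expect that establishing this boundedness (equivalently, verifying the sufficient condition directly) is where the real work lies, the remaining manipulations being routine.
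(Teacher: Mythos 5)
Your setup matches the paper's through the first two displays (the descent recursion, the distance recursion, and the convexity/Cauchy--Schwarz bound $\mathbb{E}f(x_{k-1})-f^*\le\sqrt{\mathbb{E}\|\nabla f(x_{k-1})\|^2\,\mathbb{E}\|x_{k-1}-x^*\|^2}$), but the step you yourself flag as ``where the real work lies'' is a genuine gap, and it does not close the way you hope. When $\sum_k\alpha_k^2=\infty$ the sequence $\mathbb{E}\|x_n-x^*\|^2$ need not be bounded; the only bound available from the distance recursion is $\mathbb{E}\|x_n-x^*\|^2\le Cc_n$ with $c_n=1+\sum_{k\le n}\alpha_k^2$, which may diverge. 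Consequently your verification of Lemma \ref{lm:3sq}(c) fails: along a subsequence with $\mathbb{E}\|\nabla f(x_{n_k-1})\|^2\to0$ the Cauchy--Schwarz bound only gives $\mathbb{E}f(x_{n_k-1})\le\sqrt{\mathbb{E}\|\nabla f(x_{n_k-1})\|^2\cdot Cc_{n_k}}$, and nothing forces the product under the root to vanish. You also spend the hypothesis $\alpha_n c_n\to0$ in the wrong place (making $Z_k\to0$ in the distance recursion, which via Lemma \ref{lm:3sq}(a) only yields $\liminf_n\mathbb{E}f(x_n)=0$, short of the claim).

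The paper's resolution avoids boundedness entirely. It accepts the growth $\mathbb{E}\|x_{k-1}-x^*\|^2\le Cc_k$, substitutes the Cauchy--Schwarz bound \emph{into the descent recursion} to get
\begin{equation*}
\mathbb{E}f(x_k)\le\mathbb{E}f(x_{k-1})-C\,\frac{\alpha_k}{c_k}\bigl(\mathbb{E}f(x_{k-1})\bigr)^2+\frac{LM}{2}\,\frac{\alpha_k}{c_k}\cdot\alpha_k c_k,
\end{equation*}
and then reads this as a Lemma \ref{lm:3sq} triplet with the \emph{rescaled} step sizes $\alpha_k/c_k$ (which still sum to infinity), $Y_k=C(\mathbb{E}f(x_{k-1}))^2$ and $Z_k=\tfrac{LM}{2}\alpha_kc_k$. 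The hypothesis $\alpha_nc_n\to0$ is exactly what makes this new $Z_k$ vanish, and part (c) now applies trivially because $Y_k$ is a deterministic function of $X_{k-1}$, so $Y_{n_k}\to0$ forces $X_{n_k-1}\to0$ with no boundedness of the iterates needed. To repair your argument you should adopt this rescaling; pursuing boundedness of $\mathbb{E}\|x_n-x^*\|^2$ directly is not supported by the hypotheses.
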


\begin{proof} The case for $\sum \alpha_k^2 < +\infty$ has been proved in Theorem \ref{SGD:thm1}. Now we consider $\sum \alpha_k^2 = +\infty.$  Setting $\nu_k = \| x_{k}-x^* \|$, {by \eqref{Assum:StatXi}}, we have
$$
\mathbb{E}\nu_k^2 \le \mathbb{E}\nu_{k-1}^2 - 2\alpha_k \mathbb{E}(x_{k-1} - x^*) \nabla f(x_{k-1}) + (1+V)\alpha_k^2 \mathbb{E}\lVert  \nabla f(x_{k-1})\rVert^2 + M \alpha_k^2.
$$
By the convex condition, we get $(x-x^*)\nabla f(x) > 0$, which gives
\begin{equation}\label{equation11}
\mathbb{E}\nu_k^2 \le \mathbb{E}\nu_{k-1}^2 + (1+V)\alpha_k^2 \mathbb{E} \lVert\nabla  f(x_{k-1}) \rVert^2 + M \alpha_k^2.
\end{equation}
Let $c_n = 1 + \sum_{k=1}^n \alpha_k^2$.  According to Theorem \ref{SGD:thm1}, we know
$
\sum_{k=1}^n \alpha_k \mathbb{E}\lVert \nabla f(x_{k-1})\rVert^2 \le Cc_n,
$
which gives $\mathbb{E}\nu_k^2 \le Cc_n.$
Again with the convex condition, we have
$$(\mathbb{E}f(x_{k-1}))^2 \le \mathbb{E}\lVert \nabla f(x_{k-1}) \rVert^2 \mathbb{E}\nu_{k-1}^2. $$
So
\begin{equation*}
\begin{split}
\mathbb{E}f(x_k) & \le \mathbb{E}f(x_{k-1}) - C\alpha_k \mathbb{E}\lVert \nabla f(x_{k-1}) \rVert^2 + \frac{LM}{2}\alpha_k^2  \\
&\le \mathbb{E} f(x_{k-1}) - C\alpha_k \frac{\mathbb{E} \lVert f(x_{k-1}) \rVert^2}{ \mathbb{E}\nu_{k-1}^2}+ \frac{LM}{2}\alpha_k^2  \\
&\le \mathbb{E} f(x_{k-1}) - C\alpha_kc_{k}^{-1} \mathbb{E} \lVert f(x_{k-1}) \rVert^2 + \frac{LM}{2} \alpha_k^2.
\end{split}
\end{equation*}
With condition $\alpha_n c_n \rightarrow 0,$  taking
$$(X_k, Y_k, Z_k) = (\mathbb{E}f(x_n), C\mathbb{E}\| f(x_{k-1})\|^2, {LM\alpha_kc_{n}/2} )$$
  in Lemma \ref{lm:3sq} and using $\{ \alpha_n c_{n}^{-1} \}$ instead of $\{\alpha_k \}$, it is easy to check that $\sum \alpha_n c_{n}^{-1} = +\infty$ {by Lemma \ref{lm:supplement}}, and we have
\begin{equation*}
\begin{split}
X_k = \mathbb{E}f(x_n) \rightarrow X^* = f^*.
\end{split}
\end{equation*}
\end{proof}

\begin{remark}
The condition $\alpha_n \sum_{k=1}^n \alpha_k^2 \rightarrow 0$ on step size in the above theorem is more general than $\sum \alpha_k^2 < \infty$ in \cite{Bertsekas00}. For example, $\alpha_n = C n^{a}, a \in (1/3,1]$ can still ensure the convergence of vSGD for convex problems.
\end{remark}

\begin{theorem}\label{th:sdg3}
For convex function $f(x)$ with lower bound and the minima $x^*$, if $f(x)$ is $L$-smooth,  the assumptions (\ref{Assum1:alpha}), (\ref{Assum:StatXi}) hold,   and the following condition  (more general than strongly convex)  is fulfilled
\begin{equation}\label{weakconvex}
\begin{split}
\exists K_0, \delta > 0,  (f(x) - f^*)^2 \le K_0\| \nabla f(x)\|^2, \mathrm{for }\ x \in \{\| \nabla f(x)\|^2 \le \delta \},
\end{split}
\end{equation}
then we have
$$
\underset{n\rightarrow +\infty}{\lim} \ \mathbb{E} f(x_n) = f(x^*) = f^*.
$$
\end{theorem}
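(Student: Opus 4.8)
The plan is to reuse the descent recursion \eqref{ref:sgditeration} from the proof of Theorem \ref{SGD:thm1} together with part (c) of Lemma \ref{lm:3sq}; the new local growth condition \eqref{weakconvex} is precisely what will let us identify the subsequential limit. Concretely, since $f^*$ is the global minimum, I would set
\[
X_k = \mathbb{E} f(x_k) - f^* \ge 0, \quad Y_k = C\,\mathbb{E}\|\nabla f(x_{k-1})\|^2 \ge 0, \quad Z_k = \tfrac{LM}{2}\alpha_k,
\]
so that $X_k \le X_{k-1} - \alpha_k Y_k + \alpha_k Z_k$ and $Z_k \to 0$ by \eqref{Assum1:alpha}. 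Lemma \ref{lm:3sq} then applies, and in particular $\liminf_k Y_k = 0$. To invoke Lemma \ref{lm:3sq}(c) with $X = 0$ it suffices to verify the implication: along any subsequence $\{n_k\}$ with $Y_{n_k}\to 0$, i.e. $\mathbb{E}\|\nabla f(x_{n_k-1})\|^2 \to 0$, one has $X_{n_k}\to 0$, i.e. $\mathbb{E} f(x_{n_k}) \to f^*$.

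First I would dispose of the index shift: the recursion gives $X_{n_k} \le X_{n_k-1} + \tfrac{LM}{2}\alpha_{n_k}^2$, and since $\alpha_{n_k}\to 0$ and $X_{n_k}\ge 0$, it is enough to prove $\mathbb{E}\big(f(x_{n_k-1}) - f^*\big)\to 0$ under the hypothesis $\mathbb{E}\|\nabla f(x_{n_k-1})\|^2\to 0$. Writing $w = x_{n_k-1}$ and splitting on the event $A = \{\|\nabla f(w)\|^2 \le \delta\}$ from \eqref{weakconvex}, on $A$ the growth condition yields $f(w) - f^* \le \sqrt{K_0}\,\|\nabla f(w)\|$, so by Cauchy--Schwarz
\[
\mathbb{E}\big[(f(w)-f^*)\mathbf{1}_{A}\big] \le \sqrt{K_0}\,\mathbb{E}\|\nabla f(w)\| \le \sqrt{K_0}\,\big(\mathbb{E}\|\nabla f(w)\|^2\big)^{1/2} \longrightarrow 0.
\]

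The main obstacle is the complementary event $A^c = \{\|\nabla f(w)\|^2 > \delta\}$, where the local condition is unavailable and the function gap is not controlled by the gradient. By Markov's inequality $\mathbb{P}(A^c)\le \mathbb{E}\|\nabla f(w)\|^2/\delta \to 0$, so the event becomes rare, but its contribution to the expectation can still be large a priori. Here I would fall back on convexity, $f(w) - f^* \le \langle \nabla f(w), w - x^*\rangle \le \|\nabla f(w)\|\,\|w - x^*\|$, and Cauchy--Schwarz to get
\[
\mathbb{E}\big[(f(w)-f^*)\mathbf{1}_{A^c}\big] \le \big(\mathbb{E}\|\nabla f(w)\|^2\big)^{1/2}\,\big(\mathbb{E}[\|w-x^*\|^2\mathbf{1}_{A^c}]\big)^{1/2}.
\]

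The first factor vanishes, so the argument closes provided $\mathbb{E}[\|x_{n_k-1}-x^*\|^2\mathbf{1}_{A^c}]$ stays bounded along the subsequence; controlling this is the delicate point, since under \eqref{Assum1:alpha} alone the recursion for $\mathbb{E}\|x_k-x^*\|^2$ (as in the previous theorem) only gives a bound of order $1+\sum_{j\le k}\alpha_j^2$, which may diverge. I would therefore either play the smallness of $\mathbb{P}(A^c)$ against a higher-moment (uniform integrability) bound on $\|x_k - x^*\|$, or combine the negative drift $-2\alpha_k\langle x_{k-1}-x^*,\nabla f(x_{k-1})\rangle$, dropped in the crude estimate of the convex case, with \eqref{weakconvex} to keep $\mathbb{E}\|x_k-x^*\|^2$ bounded near the minimizer. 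Once $\mathbb{E}(f(w)-f^*)\to 0$ is secured on both events, the required implication holds and Lemma \ref{lm:3sq}(c) delivers $\lim_n \mathbb{E} f(x_n) = f^*$.
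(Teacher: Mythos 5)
Your route is essentially the paper's: both run the descent recursion \eqref{ref:sgditeration} through Lemma \ref{lm:3sq}(c), and both use \eqref{weakconvex} to convert smallness of the gradient into smallness of the function gap on the region $\{\|\nabla f\|^2\le\delta\}$. The only structural difference is cosmetic: the paper folds the case split into the definition of $Y_k$ (taking $Y_k = CK_0^{-1}\mathbb{E}(f(x_{k-1}))^2$ when $\|\nabla f(x_{k-1})\|^2<\delta$ and $Y_k=C\mathbb{E}\|\nabla f(x_{k-1})\|^2$ otherwise, so that \eqref{weakconvex} guarantees the recursion still holds with this smaller $Y_k$), whereas you keep $Y_k=C\mathbb{E}\|\nabla f(x_{k-1})\|^2$ and verify the hypothesis of Lemma \ref{lm:3sq}(c) directly by splitting the expectation over $A=\{\|\nabla f(x_{n_k-1})\|^2\le\delta\}$ and $A^c$. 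Your treatment of $A$ (via $f(w)-f^*\le\sqrt{K_0}\|\nabla f(w)\|$ and Cauchy--Schwarz) and your index-shift reduction are both fine.

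The step you flag as delicate --- bounding $\mathbb{E}[(f(x_{n_k-1})-f^*)\mathbf{1}_{A^c}]$, equivalently keeping $\mathbb{E}[\|x_{n_k-1}-x^*\|^2\mathbf{1}_{A^c}]$ under control --- is a genuine gap in your write-up: you offer two possible strategies (uniform integrability via higher moments, or exploiting the dropped negative drift) but execute neither, and under \eqref{Assum1:alpha} alone, without $\sum\alpha_k^2<\infty$ or the condition $\alpha_n\sum_{k\le n}\alpha_k^2\to 0$ of the preceding theorem, the bound $\mathbb{E}\|x_k-x^*\|^2\le C(1+\sum_{j\le k}\alpha_j^2)$ may indeed diverge, and Markov's inequality on $\PP(A^c)$ alone cannot rule out a heavy tail of $f(x_{n_k-1})$ on the rare bad event. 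You should be aware, however, that the paper's own proof does not close this either: the event $\{\|\nabla f(x_{k-1})\|^2<\delta\}$ is random, so the paper's piecewise $Y_k$ is not well defined as stated, and under the natural reading ($Y_k$ as an expectation split over the two events) the convergence $Y_{n_k}\to 0$ still only controls $\mathbb{E}[f(x_{n_k-1})\mathbf{1}_A]$ and $\PP(A^c)$, leaving the contribution of $f(x_{n_k-1})$ on $A^c$ unaddressed --- exactly the term you isolate. So your proposal matches the paper's argument in substance and is, if anything, more explicit about where the missing uniform-integrability input is needed; to make either version rigorous one must supply that input (e.g.\ an a priori moment bound on $\|x_k-x^*\|$, or an additional step-size condition).
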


\begin{proof}
Without loss of generality, we set $f^* = f(x^*) = 0$. From Theorem \ref{SGD:thm1}, we know $\lim \inf_k  \|  \nabla f(x_k)  \|  = 0$. Since $\cap_n \{x|  \| \nabla f(x)  \| < 1/n\} = \{x|  \|  \nabla f(x)  \|  = 0\}$,  we get ${\lim\inf_k \mathbb{E} \lVert f(x_{k}) \rVert^2 = 0} $. Set
\begin{equation*}
Y_k = \begin{cases}
CK_0^{-1}\mathbb{E} \lVert f(x_{k-1}) \rVert^2 &\text{if }  \lVert \nabla f(x_{k-1}) \rVert^2 < \delta,\\
C\mathbb{E} \lVert \nabla f(x_{k-1}) \rVert^2 &\text{otherwise},
\end{cases}
\end{equation*}
and $X_k = \mathbb{E}f(x_n),  Z_k = C\alpha_k$. By (\ref{ref:sgditeration}), according to (c) in Lemma \ref{lm:3sq},  we have
\begin{equation*}
\begin{split}
X_k \rightarrow X^* = f^*.
\end{split}
\end{equation*}
That means we have $f(x_k) \rightarrow f^* $ in the sense of expectation.
\end{proof}

From the above theorem,  it can be seen that the constraints on the step size become less strict by strengthening the conditions of the function $f(x)$.

In fact, the condition (\ref{weakconvex}) is relatively general. For a convex function $f(x)$ with second derivative,
if there exists $ c > 0$ such that $D = \{ x | \|\nabla f(x)\| < c \}$ is bounded, then (\ref{weakconvex}) can be fulfilled. This situation is illustrated by the following example.

Let $u = x - x^*,$ and $h(t) = f(tu) - f(x^*)$.  We have $ h'(t) = \nabla f(tu)^T u \ge 0$ and $h''(t) = u^T{\nabla^2 }f(tu)u \ge 0,$
$$
h(t)^2 = h'(\theta t)^2 t^2 = h'(t)^2 t^2 - (h'(t)^2 - h'(\theta t)^2 )t^2,
$$
where $\theta \in (0,1)$.  With $(h'(t)^2)' = 2h''(t)h'(t)$, we get
$$
 (h'(t)^2 - h'(\theta t)^2 )t^2 =  (1-\theta)h''(\eta t)h'(\eta t)t^3\geq 0,
$$
where $\eta \in (\theta,1)$,  and
$$
h(t)^2 \le  h'(t)^2 t^2, ~(f(x) - f^*)^2 \le \| \nabla f(x)\|^2 \|x - x^*\|^2,
$$
since the set $D$ is bounded. Let $K_0 = \sup_{x\in D}\|x - x^*\|^2$  then (\ref{weakconvex}) is fulfilled. As the following corollary, we can also prove the convergence for strongly convex function.
\begin{corollary}
If $f(x)$ satisfies conditions in Theorem \ref{th:sdg3} and is $\mu$-strongly convex, then $x_k$ converge to $x^*$ in $L^2$.
\end{corollary}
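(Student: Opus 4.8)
The plan is to combine the functional-value convergence already established in Theorem \ref{th:sdg3} with the quadratic growth that strong convexity forces around the minimizer. Since $f$ is assumed to satisfy all the hypotheses of Theorem \ref{th:sdg3}, that theorem gives $\lim_{n\to\infty}\mathbb{E}f(x_n)=f^*$; normalizing $f^*=f(x^*)=0$ as before, this reads $\mathbb{E}f(x_n)\to 0$. It therefore suffices to bound $\mathbb{E}\|x_n-x^*\|^2$ from above by $\mathbb{E}f(x_n)-f^*$, which is exactly what $\mu$-strong convexity supplies.

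Concretely, I would first record the defining inequality of $\mu$-strong convexity,
\[
f(y)\ge f(x)+\nabla f(x)^T(y-x)+\frac{\mu}{2}\|y-x\|^2,\qquad x,y\in\mathbb{R}^d,
\]
and specialize it to $x=x^*$, $y=x_n$. Using $\nabla f(x^*)=0$ (as $x^*$ is the minimizer) this collapses to the quadratic lower bound
\[
f(x_n)-f^*\ \ge\ \frac{\mu}{2}\|x_n-x^*\|^2\ \ge\ 0 .
\]
Taking expectations yields $\tfrac{\mu}{2}\,\mathbb{E}\|x_n-x^*\|^2\le \mathbb{E}f(x_n)-f^*$. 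Letting $n\to\infty$ and invoking $\mathbb{E}f(x_n)\to f^*$ from Theorem \ref{th:sdg3}, the right-hand side tends to $0$; since $\mu>0$ we conclude $\mathbb{E}\|x_n-x^*\|^2\to 0$, that is, $x_n\to x^*$ in $L^2$.

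For completeness I would also verify that the weak-convexity hypothesis (\ref{weakconvex}) demanded by Theorem \ref{th:sdg3} is not a genuine extra restriction in this setting: $\mu$-strong convexity implies the Polyak--{\L}ojasiewicz bound $\|\nabla f(x)\|^2\ge 2\mu\,(f(x)-f^*)$, obtained by minimizing the right-hand side of the strong-convexity inequality over $y$. Combining the equivalent form $f(x)-f^*\le \tfrac{1}{2\mu}\|\nabla f(x)\|^2$ with itself gives $(f(x)-f^*)^2\le \tfrac{1}{4\mu^2}\|\nabla f(x)\|^4\le \tfrac{\delta}{4\mu^2}\|\nabla f(x)\|^2$ whenever $\|\nabla f(x)\|^2\le\delta$, so (\ref{weakconvex}) holds with $K_0=\delta/(4\mu^2)$. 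I do not expect any analytic obstacle here: all the difficulty has been absorbed into Theorem \ref{th:sdg3}, and the only genuinely new ingredient is the elementary strong-convexity lower bound $f(x_n)-f^*\ge\tfrac{\mu}{2}\|x_n-x^*\|^2$, which is what converts convergence of function values into $L^2$ convergence of the iterates.
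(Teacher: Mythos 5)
Your proof is correct and follows essentially the same route as the paper: verify that $\mu$-strong convexity implies condition (\ref{weakconvex}), invoke Theorem \ref{th:sdg3} to get $\mathbb{E}f(x_n)\to f^*$, and convert this to $L^2$ convergence of the iterates via the quadratic growth bound $f(x)-f^*\ge \tfrac{\mu}{2}\|x-x^*\|^2$. If anything, your write-up is more explicit than the paper's (which leaves the final quadratic-growth step implicit and verifies (\ref{weakconvex}) by noting the level set $D$ is bounded rather than by squaring the Polyak--{\L}ojasiewicz inequality), but the underlying argument is the same.
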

\begin{proof}
Because $f(x) - f(x^*) \le \frac{L}{2} \lVert x - x^* \rVert^2 \le \frac{L}{\mu} \lVert x - x^* \rVert  \lVert \nabla f(x) \rVert $,  the set $D$ exists. The  condition  (\ref{weakconvex}) is fulfilled. So $x_k$ converge to $x^*$ in $L^2$.
\end{proof}

\section{Convergence  of mSGD and NaSGD}\label{sec:msgd}

\subsection{Convergence  of  mSGD}

Assume $f$ is twice differentiable, $L$-smooth (\ref{eq:L-smooth}), and has lower bound $f^*$.
Consider the following mSGD
iteration
\begin{equation}\label{eq:mSGD31}
\begin{aligned}
&x_{k}=x_{k-1}+\alpha_{k} v_{k}, \\
&v_{k}=v_{k-1}-\mu_{k} \alpha_{k} v_{k-1}-\alpha_{k} \nabla f\left(x_{k-1}\right)+\alpha_{k} \xi_{k},
\end{aligned}
\end{equation}
where $\mu_k$ is bounded by $\bar{\mu} $ and $\tilde{\mu}$, i.e., $0 < \bar{\mu} \le \mu_k \le \tilde{\mu}.$  The reason why we consider format (\ref{eq:mSGD31}) is that it is implicitly  mentioned  in \cite{CLTO} and this format also has a continuum limit version like \cite{sirignano2019}.

\begin{theorem}\label{SGD:thm4}
If $f(x)$  has lower bound and the assumptions (\ref{Assum1:alpha}), (\ref{Assum:StatXi}) hold, we have
$$
\underset{n\rightarrow +\infty}{\lim \inf} \ \mathbb{E}\lVert \nabla f(x_{n})\rVert^2 = 0.
$$
If $\underset{n\rightarrow \infty}{\lim}\sum_{k=1}^n \alpha_k^2 <\infty$,  we get
$$
\underset{n\rightarrow +\infty}{\lim} \ \mathbb{E}\lVert \nabla f(x_{n})\rVert^2 = 0.
$$
If $f(x)$ is convex with the minima $x^*$ and satisfies the assumption
\begin{equation}\label{eq:weakvonvex2}
\begin{split}
\exists K_0, \delta > 0, (f(x) - f^*)^2 \le K_0\|\nabla f(x)\|^2, \mathrm{for }\ x \in \{\| \nabla f(x)\|^2 \le \delta \},
\end{split}
\end{equation}
 we have
$$
\underset{n\rightarrow +\infty}{\lim} \ \mathbb{E} f(x_n) = f(x^*) = f^*.
$$
\end{theorem}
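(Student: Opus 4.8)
The plan is to reduce all three assertions to the scalar template already established for vSGD, namely Lemma \ref{lm:3sq} together with the arguments of Theorems \ref{SGD:thm1} and \ref{th:sdg3}. For vSGD that template rested on the one-step descent inequality \eqref{ref:sgditeration} for $\mathbb{E}f(x_k)$; the whole difficulty for mSGD is that the momentum iteration \eqref{eq:mSGD31} dissipates the velocity $\|v_k\|^2$ rather than $f$ itself, so $f(x_k)$ alone no longer decreases. I would therefore replace $f$ by a Lyapunov function mimicking the energy of the underlying damped Hamiltonian flow $\dot x = v,\ \dot v = -\mu v - \nabla f(x)$, namely
\[
E_k = \big(f(x_k) - f^*\big) + \tfrac12\|v_k\|^2 + \epsilon\,\langle \nabla f(x_k), v_k\rangle ,
\]
with a small parameter $\epsilon>0$ to be fixed. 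The cross term is the crucial ingredient: in continuous time it is exactly what converts the purely velocity-based dissipation $-\mu\|v\|^2$ into genuine gradient dissipation, since $\tfrac{d}{dt}\langle\nabla f,v\rangle$ produces the term $-\epsilon\|\nabla f\|^2$.

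The second step is the one-step drift estimate. Using the $L$-smooth bound \eqref{eq:th1} on $f(x_k)-f(x_{k-1})$, expanding $\tfrac12\|v_k\|^2$ and $\langle\nabla f(x_k),v_k\rangle$ along \eqref{eq:mSGD31}, and taking $\mathbb{E}[\,\cdot\,|\mathcal F_{k-1}]$ with $\mathbb{E}[\xi_k|\mathcal F_{k-1}]=0$ and the variance bound \eqref{Assum:StatXi}, I expect the order-$\alpha_k$ terms to organize as follows: the two $\langle\nabla f(x_{k-1}),v_{k-1}\rangle$ contributions coming from the $f$-drift and the $\tfrac12\|v\|^2$-drift cancel, the damping leaves $-\mu_k\alpha_k\|v_{k-1}\|^2$, and the cross term contributes $-\epsilon\alpha_k\|\nabla f(x_{k-1})\|^2$ (the gradient dissipation we want) plus a harmless $-\epsilon\mu_k\alpha_k\langle\nabla f(x_{k-1}),v_{k-1}\rangle$ to be absorbed by Young's inequality. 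The term $\langle\nabla f(x_k)-\nabla f(x_{k-1}),v_k\rangle$ is bounded deterministically by $L\alpha_k\|v_k\|^2$ via $L$-smoothness, so no exact conditional expectation of $\nabla f(x_k)$ is needed. Collecting the $O(\alpha_k^2)$ remainders, using $\mathbb{E}[\|v_k\|^2|\mathcal F_{k-1}]\le C\|v_{k-1}\|^2 + O(\alpha_k^2)$ and $\mathbb{E}[\|\xi_k\|^2|\mathcal F_{k-1}]\le M+V\|\nabla f(x_{k-1})\|^2$, and choosing $\epsilon$ small and $k$ large enough, the goal is the clean inequality
\[
\mathbb{E}E_k \le \mathbb{E}E_{k-1} - c\,\alpha_k\,\mathbb{E}\big(\|\nabla f(x_{k-1})\|^2 + \|v_{k-1}\|^2\big) + CM\,\alpha_k^2 ,
\]
with $c>0$, which is the exact analogue of \eqref{ref:sgditeration}.

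Finally, I would check that $E_k\ge 0$ so that it can play the role of the nonnegative $X_k$ in Lemma \ref{lm:3sq}: bounding the cross term below by $-\tfrac14\|v_k\|^2 - \epsilon^2\|\nabla f(x_k)\|^2$ and using the standard $L$-smooth estimate $\|\nabla f(x)\|^2 \le 2L(f(x)-f^*)$, one gets $E_k \ge (1-2L\epsilon^2)(f(x_k)-f^*) + \tfrac14\|v_k\|^2 \ge 0$ for $\epsilon$ small. With the displayed drift inequality in hand, the three conclusions follow exactly as in the vSGD case: part (a) of Lemma \ref{lm:3sq} gives $\liminf_n \mathbb{E}(\|\nabla f(x_n)\|^2+\|v_n\|^2)=0$ and hence the first claim; under $\sum\alpha_k^2<\infty$ one has $\sum\alpha_k Z_k<\infty$, so part (b) together with the upgrade-to-limit argument of Theorem \ref{SGD:thm1} gives the second claim; and in the convex case I would split $Y_k$ according to whether $\|\nabla f(x_{k-1})\|^2<\delta$, replacing $\|\nabla f\|^2$ by $K_0^{-1}(f-f^*)^2$ via \eqref{eq:weakvonvex2} and invoking part (c), exactly as in Theorem \ref{th:sdg3}, to conclude $\mathbb{E}f(x_n)\to f^*$. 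The main obstacle is the drift step: one must verify that every $O(\alpha_k^2)$ remainder — in particular the variance-induced $V\alpha_k^2\|\nabla f(x_{k-1})\|^2$ and the $\alpha_k^2\|v_k\|^2$ pieces — is genuinely dominated for large $k$, so that the coefficient of $\alpha_k\|\nabla f(x_{k-1})\|^2$ stays strictly negative after the choice of $\epsilon$. A secondary nuisance in the convex part is the one-index mismatch between the $k$-indexed $E_k$ and the $(k-1)$-indexed $Y_k$ when passing from $\liminf$ to $\lim$, which is handled using $\|x_k-x_{k-1}\|=\alpha_k\|v_k\|\to0$ and $\mathbb{E}\|v_k\|^2\to0$ along the relevant subsequence.
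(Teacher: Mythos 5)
Your proposal follows essentially the same route as the paper's own proof: the Lyapunov function $E_k=(f(x_k)-f^*)+\tfrac12\|v_k\|^2+\epsilon\langle\nabla f(x_k),v_k\rangle$ is exactly the paper's $\tilde H_k=H_k+\zeta\tilde Z_k$ with $\tilde Z_k=v_k^T\nabla f(x_k)$, the drift estimate with cancellation of the $\langle\nabla f,v\rangle$ terms and the $-c\,\alpha_k\mathbb{E}(\|\nabla f\|^2+\|v\|^2)$ dissipation matches the paper's inequality \eqref{msgd:ite}, and the three conclusions are drawn from parts (a), (b), (c) of Lemma \ref{lm:3sq} just as in the paper. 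The approach and the key choices (small cross-term coefficient, lower-boundedness of the Lyapunov function, the $\delta$-split of $Y_k$ in the convex case) coincide, so no further comparison is needed.
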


\begin{proof}
We first perform Lyaponuv analysis, which is similar to \cite{li2022revisiting}. Define the Hamiltonian $H_k = f(x_k)-f(x^*) + \|v_k\|^2/2 $ and  $\bar{H}_k = \| \nabla f(x_k)\|^2 + \|v_k\|^2$.
By \eqref{eq:mSGD31}, $L$-smoothness of $f$  and $\mathbb{E}[  \| \xi_k \|^2 |\mathcal{F}_{k-1}] \le M + K_{\xi} \| \nabla f(x_{k-1})\|^2$, we have
\begin{equation*}
\begin{split}
\mathbb{E} [f(x_k)|\mathcal{F}_{k-1} ] & \le f(x_{k-1}) + \alpha_k \mathbb{E} [\nabla f(x_{k-1})^T v_k|\mathcal{F}_{k-1} ] + \frac{L\alpha_k^2}{2}\mathbb{E} [ \| v_k\|^2|\mathcal{F}_{k-1} ]   \\
& \le f(x_{k-1}) + \alpha_k\nabla f(x_{k-1})^Tv_{k-1} + C\alpha_k^2 (1 + O (\bar{H}_{k-1}))
\end{split}
\end{equation*}
and
\begin{equation*}
\begin{split}
\mathbb{E}\Big[\frac{\| v_k \|^2}{2}|\mathcal{F}_{k-1}\Big] & \le \frac{\| v_{k-1}\|^2}{2} + \frac{1}{2}\mathbb{E} [ \|v_k - v_{k-1} \|^2|\mathcal{F}_{k-1} ] - \alpha_k \left(\tilde {\mu}\|v_{k-1}\|^2 + \nabla f(x_{k-1})v_{k-1}\right) \\
& \le \frac{\| v_{k-1} \|^2}{2} - \alpha_k (\mu_{k}\|v_{k-1}\|^2 + \nabla f(x_{k-1})v_{k-1}) + C\alpha_k^2(1 + O (\bar{H}_{k-1})),
\end{split}
\end{equation*}
thus
\begin{equation*}
\begin{split}
\mathbb{E} [H_k|\mathcal{F}_{k-1} ] & \le H_{k-1} -  \mu_{k} \alpha_k \| v_{k-1} \|^2 + C\alpha_k^2 (1 +O (\bar{H}_{k-1})).
\end{split}
\end{equation*}

By introducing the term $\tilde{Z}_k = v_k^T\nabla f(x_k)$, with $L$-smoothness of $f$, we get
\begin{equation*}
\begin{split}
\mathbb{E} [\tilde{Z}_k|\mathcal{F}_{k-1} ]  & = \tilde{Z}_{k-1} + \mathbb{E} [ v_{k}^T(\nabla f(x_k) - \nabla f(x_{k-1}))|\mathcal{F}_{k-1} ]+ \mathbb{E}[(v_k - v_{k-1})^T\nabla f(x_{k-1})|\mathcal{F}_{k-1} ] \\
&\le \tilde{Z}_{k-1} + \alpha_k (L \mathbb{E}[ \| v_{k} \|^2 |\mathcal{F}_{k-1} ]  -  \| \nabla f(x_{k-1}) \|^2  - \mu_{k} v_{k-1}^T \nabla f(x_{k-1})) \\
& = \tilde{Z}_{k-1} + \alpha_k (L \| v_{k-1} \|^2  -  \| \nabla f(x_{k-1}) \|^2  - \mu_{k} v_{k-1}^T \nabla f(x_{k-1})) + \alpha_k^2(O(\bar{H}_{k-1})+o(1)) .
\end{split}
\end{equation*}

Now we consider the Lyapunov function $H^E_k = \mathbb{E}\tilde{H}_k$ with $\tilde{H}_k = H_k + \zeta \tilde{Z}_k$, where $\zeta > 0$ is small enough.
Then
\begin{equation*}
\begin{split}
&\mathbb{E} [\tilde{H}_k|\mathcal{F}_{k-1} ]  \le \tilde{H}_{k-1} - \alpha_k F_{k-1} + C \alpha_k^2 (1 + O (\tilde{H}_{k-1})),
\end{split}
\end{equation*}
where $F_{k-1}  =  \zeta\| \nabla f(x_{k-1}) + \frac{\mu_{k}}{2}v_{k-1} \|^2 + (\tilde\mu - \zeta(\frac{\mu_{k}^2}{4} + L)) \| v_{k-1} \|^2$.  We get $\tilde{F}_k=\Theta (\bar{H}_k)$, {where the notation $a_n = \Theta(b_n)$ means there exists $ C_1, C_2 > 0$, such that $C_1 b_n\le |a_n| \le C_2 b_n$.} So there exists $K>0$ such that
\begin{equation}\label{msgd:ite}
\begin{split}
&H^E_k \le H^E_{k-1} - K \alpha_k \mathbb{E}\bar{H}_{k-1} + C \alpha_k^2.
\end{split}
\end{equation}
 From the $L$-smoothness of $f$ and  $f(x)$ with a lower bound $f^*$, we get $\tilde{H}_k\ge \frac{L}{2}\| \nabla f(x) \|^2 + \zeta v_k^T\nabla f(x_k) + \| v_k\|^2 / 2$, which means $\tilde{H}_k, H_k^E$ have lower bound.  Let $V_k = \mathbb{E} \lVert \nabla f(x_{k-1}) \rVert^2 $.
Taking $(X_k, Y_k, Z_k) = (H^E_{k}, \mathbb{E}\bar{H}_{k-1},C \alpha_k)$  in Lemma \ref{lm:3sq} (a),  we get $\lim \inf \bar{H}_k = 0,$  which means $\lim \inf V_k = 0.$

For $\sum \alpha_k^2 < +\infty$, by  in Lemma \ref{lm:3sq} (b), we get $X_k$ is convergent.
If there exists $\varepsilon > 0$, such that
 $\lim\sup_n V_n \ge \varepsilon$  that means  we can find infinite  $k$ fulfilled $V_k \le \varepsilon / 4,  V_{m_k} \ge \varepsilon,  V_k \in [\varepsilon / 4,\varepsilon], i\in (k, m_k)$, we get
\begin{equation*}
\begin{split}
X_{m_k} & \le X_k + \sum_{i = k+1}^{m_k}\alpha_i Z_i -  \sum_{i = k+1}^{m_k}\alpha_i Y_i,
\end{split}
\end{equation*}
then we have
\begin{equation*}
\begin{split}
\sum_{i = k+1}^{m_k}\alpha_i\varepsilon/4 \le \sum_{i = k+1}^{m_k}\alpha_iV_i \le C\sum_{i = k+1}^{m_k}\alpha_i Y_i \rightarrow 0.
\end{split}
\end{equation*}
With $L$-smooth condition,  we obtain
\begin{equation*}
\begin{split}
\mathbb{E} (\lVert \nabla f(x_k) - \nabla f(x_{k-1}) \rVert^2) & \le 2L \alpha_k^2 \mathbb{E}\lVert  v_{k} \rVert^2 \le C(\mathbb{E}\bar{H}_k + 1)\alpha_k^2 \le C\alpha_k^2.
\end{split}
\end{equation*}
By Minkowski inequality, we have
\begin{equation*}
\begin{split}
\sqrt{\varepsilon}/2 \le V_k^{1/2} - V_{m_k}^{1/2} \le C\sum_{i = k+1}^{m_k}\alpha_i \rightarrow 0,
\end{split}
\end{equation*}
which {leads to contradiction} and the proof of the second part is completed.

For the last part with (\ref{eq:weakvonvex2}),  similar to  SGD,  without loss of generality, we set $f^* = f(x^*) = 0$, which means the sharp bound of $\tilde{H}_k$ is $0$. We take
\begin{equation}\label{Yk}
Y_{k+1} = \begin{cases}
K(\mathbb{E} (K_0^{-1}\lVert f(x_{k}) \rVert^2 + \| v_k\|^2) )&\text{if }  \lVert \nabla f(x_{k}) \rVert^2 < \delta,\\
K\mathbb{E} \bar{H}_k &\text{otherwise},
\end{cases}
\end{equation}
$X_k = \mathbb{E}\tilde{H}_k$,  and $Z_k = C\alpha_k$. By (\ref{msgd:ite}), according  to Lemma \ref{lm:3sq} (c), we have
\begin{equation*}
\begin{split}
X_k \rightarrow X^* = 0.
\end{split}
\end{equation*}
This means we have $f(x_k) \rightarrow f^* $  in $L^1$.
\end{proof}


\subsection{Convergence  of NaSGD }

In this section, we give the convergence  of NaSGD, where $\mu_k$ is bounded by $\bar{\mu} $ and $\tilde{\mu}$, i.e., $0 < \bar{\mu} \le \mu_k \le \tilde{\mu}.$  Assume $f$ is twice differentiable and satisfies $L$-smooth (\ref{eq:L-smooth}). Further assume $ \lim \sup \alpha_k/\alpha_{k-1} < +\infty,$ then $\beta_k = (1 - \mu_k \alpha_k)\frac{\alpha_k}{\alpha_{k-1}} $. Let $\hat{\beta} = \lim \sup_k \beta_k.$ Consider the following NaSGD iteration
\begin{equation*}
\begin{aligned}
x_{k} & = x_{k-1} + \alpha_k v_{k}, \\
v_k & = (1 - \mu_k \alpha_k) v_{k-1}  -\alpha_k \nabla f(x_{k-1}+ \beta_k(x_{k-1} - x_{k-2}))  + \alpha_k \xi_k.
\end{aligned}
\end{equation*}

The following theorem presents the convergent results under $L$-smooth (\ref{eq:L-smooth}) and a more general learning rates conditions, which is previously unknown.

\begin{theorem}\label{LfSGD:thm1}
{If $f(x)$  has lower bound $f^*$,  the assumptions (\ref{Assum1:alpha}), (\ref{Assum:StatXi})  hold,} and  $L\hat{\beta} < \bar{\mu}$ or $f(x)$ is convex, then we have
$$
\underset{n\rightarrow +\infty}{\lim \inf} \ \mathbb{E}\lVert \nabla f(x_{n})\rVert^2 = 0.
$$
If $\underset{n\rightarrow \infty}{\lim}\sum_{k=1}^n \alpha_k^2 <\infty$, then we get
$$
\underset{n\rightarrow +\infty}{\lim} \ \mathbb{E}\lVert \nabla f(x_{n})\rVert^2 = 0.
$$
If $f(x)$ is convex and satisfies the assumption
\begin{equation*}
\begin{split}
\exists K_0,  \delta > 0, (f(x) - f^*)^2 \le K_0\|\nabla f(x)\|^2, \mathrm{for }\ x \in \{\| \nabla f(x)\|^2 \le \delta \},
\end{split}
\end{equation*}
then we obtain
$$
\underset{n\rightarrow +\infty}{\lim} \ \mathbb{E} f(x_n) = f(x^*) = f^*.
$$
\end{theorem}
\begin{proof}
For  $L\hat{\beta} < \bar{\mu}$,   consider the Hamiltonian function
$H_k = f(x_k) - f^* + \|v_k\|^2/2 $. Let $\bar{H}_k = \| \nabla f(x_k)\|^2 + \|v_k\|^2$.  We have
\begin{equation*}
\begin{split}
\mathbb{E} [H_k|\mathcal{F}_{k-1} ] & \le H_{k-1} +\alpha_k [v_{k-1}^T(\nabla f(x_{k-1}) - \nabla f(y_{k-1})) - \mu_k \| v_{k-1}\|^2 ] + o(\alpha_k ) \\
& \le H_{k-1} -  (\bar{\mu} - L\hat{\beta}) \alpha_k  \| v_{k-1}\|^2 + C\alpha_k^2(1 +O (\bar{H}_{k-1})).
\end{split}
\end{equation*}
When $f$ is convex, we have $v_{k-1}^T(\nabla f(x_{k-1}) - \nabla f(y_{k-1})) < 0,$ and the same result can be obtained except that the coefficient of the $\| v_{k-1}\|^2$ term in the above formula is $-\bar{\mu}$.

By introducing the term $\tilde{Z}_k =  v_k^T\nabla f(x_k) $, we get
\begin{equation*}
\begin{split}
\mathbb{E} [\tilde{Z}_k|\mathcal{F}_{k-1} ] &\le Z_{k-1} +   \alpha_k (L\| v_{k-1}\|^2 -   \nabla f(x_{k-1})^T\nabla f(y_{k-1}) - \tilde{\mu} v_{k-1}^T \nabla f(x_{k-1}) )+ \alpha_k^2 O (\bar{H}_{k-1}).
\end{split}
\end{equation*}
From $L$-smoothness of $f$, we have
\begin{equation*}
|\nabla f(x_{k-1})^T(\nabla f(x_{k-1})- \nabla f(y_{k-1}))| \le L\beta_k | v_{k-1}^T \nabla f(x_{k-1})| \le \hat{\beta}L \lambda \| \nabla f(x_{k-1}) \|^2 + \frac{\hat{\beta}L}{\lambda} v_{k-1}^2.
\end{equation*}
Set $\lambda= 1/(2L\hat{\beta})$, then we obtain
\begin{equation*}
\begin{split}
\mathbb{E} [\tilde{Z}_k|\mathcal{F}_{k-1} ] &\le Z_{k-1} +  \alpha_k \Big((L+ \frac{L\hat{\beta}}{\lambda})\| v_{k-1}\|^2 -  (1 - L\hat{\beta}\lambda) \| \nabla f(x_{k-1}) \|^2 - \tilde{\mu} v_{k-1}^T \nabla f(x_{k-1})\Big)\\
&+ C\alpha_k^2(1 + O (\bar{H}_{k-1})).
\end{split}
\end{equation*}
Now we consider the Lyapunov function $H^E_k = \mathbb{E}\tilde{H}_k=\mathbb{E}(H_k + \zeta \tilde{Z}_k)$ with small enough $\zeta>0$. Similar to the analysis of mSGD,  there exists $K>0$ such that
\begin{equation*}
\begin{split}
&H^E_k \le H^E_{k-1} - K \alpha_k \mathbb{E}\bar{H}_{k-1} + C \alpha_k^2.
\end{split}
\end{equation*}
The rest analysis is similar to mSGD, { that is, using the lemma \ref{lm:3sq} to obtain the convergence. We take $(X_k, Y_k, Z_k) = (H^E_{k}, \mathbb{E}\bar{H}_{k-1},C \alpha_k)$ in Lemma \ref{lm:3sq} (a), 
 (b) and for the last part we change $Y_k$ as \eqref{Yk} and use Lemma \ref{lm:3sq} (c), to obtain the convergence with convexity.} So we omit it.
\end{proof}

\subsection{Convergence  of mSGD with vanishing damping $\mu_k \rightarrow 0$}
In this section, we will give the convergence  of mSGD with vanishing damping $\mu_k \rightarrow 0$.
Assume $f$ is twice differentiable.
For the vanishing damping case, we need to make some modifications to (\ref{Assum1:alpha}). The assumption corresponding to the divergence condition (\ref{Assum1:alpha}) of mSGD is
\begin{equation}\label{eq:divergence2}
\begin{split}
&\lim_{k\rightarrow \infty}\alpha_k=0,
\quad
\lim_{k\rightarrow \infty}\mu_k=0,
\quad
\lim_{k\rightarrow \infty}\frac{\alpha_k}{\mu_k}=0,
\quad\sum_{k=1}^\infty \alpha_k\mu_k=\infty,\\
& \exists L_{\mu} \geq 0,\quad \mu_{k-1}-\mu_{k}=L_{\mu} \alpha_{k} \mu_{k}+o\left(\alpha_{k} \mu_{k}\right).
\end{split}
\end{equation}

\begin{theorem}\label{NaSGD:thm1}
Suppose that function $f(x)$  is $L$-smooth and has  minima $x^*$. If
(\ref{Assum1:alpha}), (\ref{Assum:StatXi})  hold and $\mu_k$  satisfy (\ref{eq:divergence2}), then we have
$$
\underset{n\rightarrow +\infty}{\lim \inf} \ \mathbb{E}\lVert \nabla f(x_{n})\rVert^2 = 0.
$$

If $f(x)$ is convex and satisfies the assumption
\begin{equation*}
\begin{split}
\exists K_0, \delta > 0, (f(x) - f^*)^2 \le K_0\|\nabla f(x)\|^2, \mathrm{for }\ x \in \{\| \nabla f(x)\|^2 \le \delta \},
\end{split}
\end{equation*}
then we have
$$
\underset{n\rightarrow +\infty}{\lim} \ \mathbb{E} f(x_n) = f(x^*) = f^*.
$$
\end{theorem}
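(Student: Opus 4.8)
The plan is to run the same Lyapunov analysis as in Theorem~\ref{SGD:thm4}, but with a weight that vanishes at the same rate as the damping, and then to reduce everything to Lemma~\ref{lm:3sq} applied with the \emph{effective step size} $\gamma_k:=\alpha_k\mu_k$. Concretely, I would keep $H_k=f(x_k)-f^*+\|v_k\|^2/2$, the cross term $\tilde Z_k=v_k^T\nabla f(x_k)$, and $\bar H_k=\|\nabla f(x_k)\|^2+\|v_k\|^2$, but replace the constant $\zeta$ by a $k$-dependent weight $\zeta_k=c\mu_k$ with $c>0$ small, setting $\tilde H_k=H_k+\zeta_k\tilde Z_k$ and $H^E_k=\mathbb{E}\tilde H_k$. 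The motivation is structural: since the damping contributes $\mu_k\alpha_k\|v_{k-1}\|^2$ rather than $\alpha_k\|v_{k-1}\|^2$, the genuine dissipation rate is $\gamma_k$, and condition~(\ref{eq:divergence2}) is precisely what makes $\gamma_k\to0$, $\sum_k\gamma_k=\infty$, while $\alpha_k^2=\gamma_k\cdot(\alpha_k/\mu_k)$ with $\alpha_k/\mu_k\to0$, so that the $O(\alpha_k^2)$ remainder is negligible compared to the dissipation.

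For the drift I would reuse the three per-step estimates already obtained for mSGD (which never used $\mu_k\ge\bar\mu$): the bound on $\mathbb{E}[H_k\mid\mathcal{F}_{k-1}]$ giving $-\mu_k\alpha_k\|v_{k-1}\|^2$, and the bound on $\mathbb{E}[\tilde Z_k\mid\mathcal{F}_{k-1}]$ supplying the gradient dissipation $-\alpha_k\|\nabla f(x_{k-1})\|^2$ alongside the harmful $+L\alpha_k\|v_{k-1}\|^2$. Because the weight now varies I must expand $\zeta_k\mathbb{E}[\tilde Z_k\mid\mathcal{F}_{k-1}]-\zeta_{k-1}\tilde Z_{k-1}$ and control the extra term $(\zeta_k-\zeta_{k-1})\tilde Z_{k-1}=c(\mu_k-\mu_{k-1})\,v_{k-1}^T\nabla f(x_{k-1})$; this is exactly where the last line of~(\ref{eq:divergence2}) enters, since it forces $\mu_{k-1}-\mu_k=O(\gamma_k)$, so the term is $O(\gamma_k\bar H_{k-1})$ and folds into the leading quadratic form. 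Collecting the $O(\gamma_k)$ contributions produces a quadratic form in $(\|v_{k-1}\|,\|\nabla f(x_{k-1})\|)$ whose matrix, for $c$ small, is positive definite (the off-diagonal coupling scales like $c$ and is dominated by the diagonal entries $1-cL$ and $c$), hence $\Theta(\bar H_{k-1})$; the $O(\alpha_k^2)O(\bar H_{k-1})$ errors are absorbed using $\alpha_k/\mu_k\to0$. This yields, for all large $k$,
\begin{equation*}
H^E_k\le H^E_{k-1}-K\gamma_k\,\mathbb{E}\bar H_{k-1}+C\alpha_k^2 .
\end{equation*}
With this recursion Lemma~\ref{lm:3sq}(a), applied with step size $\gamma_k$, $Y_k=K\mathbb{E}\bar H_{k-1}$ and $Z_k=C\alpha_k^2/\gamma_k=C\alpha_k/\mu_k\to0$, gives $\liminf_k\mathbb{E}\bar H_k=0$, hence the first conclusion. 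For the convex case under~(\ref{eq:weakvonvex2}), I would use the piecewise definition of $Y_k$ (splitting on $\|\nabla f(x_k)\|^2<\delta$) and invoke Lemma~\ref{lm:3sq}(c) verbatim as in the last part of Theorem~\ref{SGD:thm4}; note that part~(c) closes because its mechanism only needs $Y_i$ bounded below by a constant on a segment, so no matching between dissipation and oscillation rates is required.

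The main obstacle is the middle claim, the $\liminf\to\lim$ upgrade of $\mathbb{E}\|\nabla f(x_n)\|^2$ under $\sum\alpha_k^2<\infty$. Here $\sum_k\gamma_kZ_k=C\sum_k\alpha_k^2<\infty$, so Lemma~\ref{lm:3sq}(b) gives convergence of $H^E_k$ and summability $\sum_k\gamma_k\mathbb{E}\bar H_{k-1}<\infty$, and $\liminf_k\mathbb{E}\|\nabla f(x_k)\|^2=0$; but for nonconvex $f$ one cannot apply part~(c), since $\bar H_{n_k}\to0$ does not pin down $H^E_{n_k}$. The difficulty, absent in the constant-damping case, is that the available dissipation is weighted by $\gamma_k=\alpha_k\mu_k$ whereas the oscillation of $\|\nabla f\|$ along the trajectory is governed by $\|x_k-x_{k-1}\|=\alpha_k\|v_k\|$ and is therefore measured in $\alpha_k$; these differ by the vanishing factor $\mu_k$. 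Closing this gap is where the real work lies: it requires showing that the velocity itself decays, $\mathbb{E}\|v_k\|^2=O(\alpha_k/\mu_k)\to0$ (which follows from the contractive recursion $\mathbb{E}\|v_k\|^2\le(1-\mu_k\alpha_k)\mathbb{E}\|v_{k-1}\|^2+C\alpha_k^2(1+\mathbb{E}\bar H_{k-1})$ together with $\alpha_k/\mu_k\to0$), and then bookkeeping the contradiction argument of Theorem~\ref{SGD:thm4} carefully enough that the improved increment bound on $\mathbb{E}\|\nabla f(x_k)\|^2$ can be matched against the $\gamma_k$-weighted dissipation along each excursion above level $\varepsilon$.
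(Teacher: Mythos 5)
Your Lyapunov construction is exactly the paper's: the paper takes $\tilde H_k=H_k+\lambda\mu_k\tilde Z_k$ with $0<\lambda<1/L$ (your $\zeta_k=c\mu_k$), absorbs the weight-variation term $\lambda(\mu_k-\mu_{k-1})v_{k-1}^T\nabla f(x_{k-1})$ using the last line of (\ref{eq:divergence2}), arrives at $H^E_k\le H^E_{k-1}-K\alpha_k\mu_k\,\mathbb{E}\bar H_{k-1}+C\alpha_k^2$, and then applies Lemma \ref{lm:3sq} with $(X_k,Y_k,Z_k)=(H^E_k,\mathbb{E}\bar H_{k-1},C\alpha_k/\mu_k)$ and effective step $\alpha_k\mu_k$. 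So the first and third claims are handled the same way in both, and your treatment of the convex case via the piecewise $Y_k$ and Lemma \ref{lm:3sq}(c) matches what the paper means by ``similar to mSGD.''

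The difficulty you flag in the middle claim is genuine, and in fact the paper does not resolve it either --- it simply writes ``the rest analysis is similar to mSGD, so we omit it,'' whereas the excursion argument of Theorem \ref{SGD:thm4} does \emph{not} transfer verbatim: there the dissipation $\sum_{i=k+1}^{m_k}\alpha_iY_i\to0$ forces $\sum_{i=k+1}^{m_k}\alpha_i\to0$, which dominates the oscillation $\sum_i|V_i^{1/2}-V_{i-1}^{1/2}|\le C\sum_i\alpha_i$; here the dissipation only forces $\sum_{i=k+1}^{m_k}\alpha_i\mu_i\to0$, which does not control $\sum_{i=k+1}^{m_k}\alpha_i$ since $\mu_i\to0$. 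However, your proposed patch does not close it: the recursion $\mathbb{E}\|v_k\|^2\le(1-\mu_k\alpha_k)\mathbb{E}\|v_{k-1}\|^2+C\alpha_k^2(1+\mathbb{E}\bar H_{k-1})$ is not correct, because expanding $\|v_k\|^2$ produces the cross term $-2(1-\mu_k\alpha_k)\alpha_k\,v_{k-1}^T\nabla f(x_{k-1})$, which is first order in $\alpha_k$ (this is precisely the term the Hamiltonian is designed to cancel against $f(x_k)$, and it cannot be discarded when analysing $\|v_k\|^2$ in isolation). Bounding it by Young's inequality leaves a forcing term of order $\alpha_k\mu_k^{-1}\|\nabla f(x_{k-1})\|^2$, so the equilibrium of the recursion is of order $\mu_k^{-2}\mathbb{E}\|\nabla f\|^2$ rather than $\alpha_k/\mu_k$, and $\mathbb{E}\|v_k\|^2\to0$ does not follow. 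Even granting $\mathbb{E}\|v_k\|^2=O(\alpha_k/\mu_k)$, the per-step oscillation would be $O(\alpha_k(\alpha_k/\mu_k)^{1/2})$, whose sum over an excursion is controlled by $\sum\alpha_i\mu_i$ only under an additional relation such as $\alpha_k=O(\mu_k^3)$, which (\ref{eq:divergence2}) does not provide. So the second assertion of the theorem still needs a real argument (or an added hypothesis) in both your write-up and the paper's.
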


\begin{proof}

For mSGD with $\mu_k \rightarrow 0$, consider the Lyapunov function $H^E_k = \mathbb{E}\tilde{H}_k$, where $\tilde{H}_k = H_k + \lambda \mu_k \tilde{Z}_k,$ with $0<\lambda < 1/L$. Similar to mSGD, we have
\begin{equation*}
\begin{split}
\mathbb{E} [\tilde{H}_k|\mathcal{F}_{k-1} ]  &\le \tilde{H}_{k-1} - \alpha_k \tilde{F}_{k-1} - \lambda(\mu_k - \mu_{k-1}) v_{k-1}^T \nabla f(x_{k-1})  +  C(1+O (\bar{H}_{k-1}))\alpha_k^2,
\end{split}
\end{equation*}
where $
\tilde{F}_{k-1} = \lambda \mu_k (\| \nabla f(x_{k-1}) \|^2 + v_{k-1}^T \nabla f(x_{k-1})) + (1 - L \lambda)\mu_k \| v_{k-1} \|^2.$

Further, let $\lambda < (L + L_\mu^2 / 4)^{-1}$. By (\ref{eq:divergence2}), we have $\tilde{F}_k + \lambda(\mu_{k+1} - \mu_{k})\tilde{Z}_{k} = \mu_{k+1} \Theta(\bar{H}_k)$.  Similar to the analysis of mSGD, there exists $K>0$ such that
\begin{equation*}
\begin{split}
\mathbb{E} [\tilde{H}_k|\mathcal{F}_{k-1} ]  &\le \tilde{H}_{k-1} - K \alpha_k \mu_k \mathbb{E}\bar{H}_{k-1} + C(1+O (\tilde{H}_{k-1}))\alpha_k^2. \\
\end{split}
\end{equation*}
Then we have
 $$H^E_k \le \tilde{H}_{k-1} - K \alpha_k\mu_k \mathbb{E} \bar{H}_{k-1} + C\alpha_k^2.$$
Take $(X_k, Y_k, Z_k) = (H^E_{k}, \mathbb{E}\bar{H}_{k-1}, C \alpha_k / \mu_k)$, and use $\{ \alpha_k \mu_k\}$ instead of $\{\alpha_k \}$ in Lemma \ref{lm:3sq} { to get convergence, and for the last part we change $Y_k$ to \eqref{Yk} and use Lemma \ref{lm:3sq} (c), to obtain the convergence with convexity}. The rest analysis is similar to mSGD,  so we omit it.
\end{proof}

\begin{remark} The conditions in (\ref{eq:divergence2}) about  $\{ \alpha_k \}$  and $\{ \mu_k \}$ are reasonable.  For specific examples, see \cite{li2022revisiting}.
\end{remark}

\section{Convergence  of Average SGD} \label{sec:average}

The time average SGD we consider in this section differs from the previous
average  $\hat{x}_n = \sum_k x_k / n$ in \cite{ASGD}.  The form $\bar{x}_n$  has better properties in convex problems, which keeps the convergence $\mathbb{E}(f(\bar{x}_n))\Longrightarrow \mathbb{E}(f(x^*))$ automatically.  The previous researches, such as {\cite{ASSGD}},  need $\nabla f(x)$ to be bounded to ensure the convergence. We can use $L$-smooth to replace this.

\begin{theorem}\label{SGD:thm7}
If the convex function $f(x)$ is $L$-smooth and has a minima $x^*$,  and (\ref{Assum1:alpha}) and (\ref{Assum:StatXi}) hold, then for Average SGD we have
$$
\underset{n\rightarrow +\infty}{\lim} \ \mathbb{E} f(\bar{x}_n) = f(x^*) = f^*,
$$
where $x^*$  is the minima.
\end{theorem}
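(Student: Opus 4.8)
The plan is to reduce the statement to a weighted Ces\`aro limit via Jensen's inequality and the convex distance recursion already used earlier in this section. Since the weights $\alpha_k/\sum_{j=1}^n\alpha_j$ are nonnegative and sum to one, $\bar{x}_n$ is a convex combination of $x_0,\dots,x_{n-1}$, so convexity of $f$ and Jensen's inequality give
$$
\mathbb{E} f(\bar{x}_n) - f^* \le \frac{\sum_{k=1}^n \alpha_k\, \mathbb{E}(f(x_{k-1}) - f^*)}{\sum_{k=1}^n \alpha_k}.
$$
The entire task is then to show the right-hand side tends to $0$; combined with $f(\bar{x}_n)\ge f^*$, which forces the left-hand side to be nonnegative, this yields $\mathbb{E} f(\bar{x}_n)\to f^*$.

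First I would control the numerator. Setting $\nu_k = \|x_k - x^*\|$ and expanding the vSGD update with (\ref{Assum:StatXi}) and $\mathbb{E}[\xi_k|\mathcal{F}_{k-1}]=0$, together with the convexity bound $(x_{k-1}-x^*)^{\TT}\nabla f(x_{k-1}) \ge f(x_{k-1})-f^*$, I obtain the same distance recursion as in the convex vSGD analysis above,
$$
\mathbb{E}\nu_k^2 \le \mathbb{E}\nu_{k-1}^2 - 2\alpha_k\,\mathbb{E}(f(x_{k-1})-f^*) + (1+V)\alpha_k^2\,\mathbb{E}\|\nabla f(x_{k-1})\|^2 + M\alpha_k^2.
$$
The decisive use of $L$-smoothness (replacing the bounded-gradient hypothesis of \cite{sebbouh2021almost}) is the standard inequality $\|\nabla f(x)\|^2 \le 2L(f(x)-f^*)$, valid for $L$-smooth $f$ with minimum $f^*$. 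Substituting it, the gradient term is absorbed: for $k\ge k_0$ large enough that $\alpha_k \le (2L(1+V))^{-1}$ one has $1 - L(1+V)\alpha_k \ge 1/2$, hence $\alpha_k\,\mathbb{E}(f(x_{k-1})-f^*) \le \mathbb{E}\nu_{k-1}^2 - \mathbb{E}\nu_k^2 + M\alpha_k^2$. Telescoping from $k_0$ to $n$ and bounding the finitely many head terms by a constant gives
$$
\sum_{k=1}^n \alpha_k\,\mathbb{E}(f(x_{k-1})-f^*) \le C + M\sum_{k=1}^n \alpha_k^2,
$$
with $C$ absorbing $\mathbb{E}\nu_{k_0-1}^2$, finite for the given initial data.

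It then remains to show $\big(C + M\sum_{k=1}^n\alpha_k^2\big)\big/\sum_{k=1}^n\alpha_k \to 0$. The constant part $C/\sum_{k=1}^n\alpha_k$ vanishes immediately from $\sum\alpha_k=\infty$ in (\ref{Assum1:alpha}). For the remaining term I would invoke a Toeplitz (weighted Ces\`aro) argument: the ratio $\sum_{k=1}^n\alpha_k^2 \big/ \sum_{k=1}^n\alpha_k$ is exactly the weighted average of the sequence $\alpha_k$ with weights $\alpha_k$, and since $\alpha_k\to 0$ while the total weight $\sum_{k=1}^n\alpha_k\to\infty$, this weighted average tends to $0$. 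This is the crux of the proof and the one step demanding care, since it uses only the divergence condition (\ref{Assum1:alpha}) and not the summability $\sum\alpha_k^2<\infty$ assumed in prior work --- which is precisely the improvement the theorem claims. Combining the two estimates gives $\limsup_{n} \mathbb{E} f(\bar{x}_n) \le f^*$, and with $\mathbb{E} f(\bar{x}_n)\ge f^*$ the asserted limit follows.
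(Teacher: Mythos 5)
Your proof is correct and follows the same skeleton as the paper's: Jensen's inequality on the convex combination $\bar{x}_n$, the telescoping recursion for $\mathbb{E}\|x_k-x^*\|^2$, and the conclusion $\bigl(C+\sum_{k\le n}\alpha_k^2\bigr)/\sum_{k\le n}\alpha_k\to 0$. The one local difference is how the $\alpha_k^2\,\mathbb{E}\|\nabla f(x_{k-1})\|^2$ term in the recursion is disposed of: the paper imports the bound $\sum_k\alpha_k\,\mathbb{E}\|\nabla f(x_{k-1})\|^2\le C(1+\sum_k\alpha_k^2)$ from the telescoped vSGD inequality (\ref{ref:sgditeration}) of Theorem \ref{SGD:thm1}, whereas you absorb it into the $f(x_{k-1})-f^*$ term via the $L$-smoothness inequality $\|\nabla f(x)\|^2\le 2L(f(x)-f^*)$ once $\alpha_k$ is small. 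Both are valid; your version is self-contained and also spells out the Toeplitz/weighted-Ces\`aro step that the paper only asserts, which is indeed where the divergence condition (\ref{Assum1:alpha}) alone (without $\sum\alpha_k^2<\infty$) does the work.
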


\begin{proof}
For  $L$-smooth convex function $f(x)$  with minima $x^*$, we have
$$f(\bar{x}_n) \le \frac{\sum_{k=1}^n \alpha_k (f(x_k) - f(x^*))}{\sum_{k=1}^n \alpha_k },$$
and
\begin{equation}\label{average1}
\begin{split}
 \alpha_k (\mathbb{E}f(x_k) - f(x^*)) \le \mathbb{E} \lVert x_{k-1} - x^* \rVert ^2 - \mathbb{E} \lVert x_{k} - x^* \rVert^2 + C \alpha_k^2 (1+  \mathbb{E}\lVert \nabla f(x_k)\rVert)^2).
\end{split}
\end{equation}
From Theorem \ref{SGD:thm1} and (\ref{ref:sgditeration}), we can get $\sum \alpha_k \mathbb{E}\lVert \nabla f(x_k)\rVert^2 < +\infty$ .
With Assumptions (\ref{Assum1:alpha}),  we obtain $\sum \alpha_k^2 \mathbb{E}\lVert \nabla f(x_k)\rVert^2 < +\infty.$

By summing both sides of (\ref{average1}) at the same time, we  get
\begin{equation*}
\begin{split}
\sum_{k=1}^n  \alpha_k (\mathbb{E} f(x_k) - f(x^*)) \le C(1 + \sum_{k=1}^n \alpha_k^2) - \mathbb{E}\lVert x_{n} - x^* \rVert^2,
\end{split}
\end{equation*}
dividing both sides of the above equation by $\sum_{k=1}^n \alpha_k$, we have
\begin{equation*}
\begin{split}
\mathbb{E}f(\bar{x}_n) - f(x^*) \le C\frac{1 + \sum_{k=1}^n \alpha_k^2}{\sum_{k=1}^n \alpha_k} \rightarrow 0.
\end{split}
\end{equation*}
\end{proof}

\section{Conclusion}
In this article,  we studied the convergence of the vSGD method under  more general learning rates conditions and a more general convex assumption.   We also investigated the convergence of the mSGD and NaSGD method with usual damping $\mu_k$ and vanishing damping
$\mu_k\rightarrow 0$ by taking advantage of the Lyapunov function technique, which has been less studied.
 The convergence of time averaged SGD  was also analyzed. The application and further extension of the results obtained in this paper will be the next work.
\section*{Acknowledgments}
{The authors thank Professor Tiejun Li for his constructive suggestions. The authors also appreciate the valuable comments of the referees.} This work  was supported by the NSFC under grant no. 11825102 and National Key R\&D Program of China under grant No. 2021YFA1003300.

\newpage

\bibliographystyle{abbrv}
\bibliography{CON-Ref}

\begin{thebibliography}{10}

\bibitem{NASGDB}
M.~Assran and M.~Rabbat.
\newblock On the convergence of {N}esterov's accelerated gradient method in
  stochastic settings.
\newblock In {\em Proceedings of the 37th International Conference on Machine
  Learning}, volume 119 of {\em Proceedings of Machine Learning Research},
  pages 410--420, 13--18 Jul 2020.

\bibitem{MGDC}
A.~Barakat and P.~Bianchi.
\newblock Convergence rates of a momentum algorithm with bounded adaptive step
  size for nonconvex optimization.
\newblock In {\em Proceedings of The 12th Asian Conference on Machine
  Learning}, volume 129 of {\em Proceedings of Machine Learning Research},
  pages 225--240, 18--20 Nov 2020.

\bibitem{Bertsekas00}
D.~P. Bertsekas and J.~N. Tsitsiklis.
\newblock Gradient convergence in gradient methods with errors.
\newblock {\em SIAM Journal on Optimization}, 10(3):627--642, 2000.

\bibitem{bottou2012stochastic}
L.~Bottou.
\newblock Stochastic gradient descent tricks.
\newblock In {\em Neural networks: Tricks of the trade}, pages 421--436.
  Springer, 2012.

\bibitem{BLN}
L.~Bottou, F.~E. Curtis, and J.~Nocedal.
\newblock Optimization methods for large-scale machine learning.
\newblock {\em SIAM Review}, 60(2):223--311, 2018.

\bibitem{CLTO}
H.~Chen.
\newblock {\em Stochastic Approximation and Its Applications}.
\newblock Kluwer Academic Press, New York, 2003.

\bibitem{SGDZO}
S.~Ghadimi and G.~Lan.
\newblock Stochastic first- and zeroth-order methods for nonconvex stochastic
  programming.
\newblock {\em SIAM Journal on Optimization}, 23(4):2341--2368, 2013.

\bibitem{gitman2019understanding}
I.~Gitman, H.~Lang, P.~Zhang, and L.~Xiao.
\newblock Understanding the role of momentum in stochastic gradient methods.
\newblock In {\em Advances in Neural Information Processing Systems},
  volume~32. Curran Associates, Inc., 2019.

\bibitem{Goodfellow2016}
I.~Goodfellow, Y.~Bengio, and A.~Courville.
\newblock {\em Deep Learning}.
\newblock MIT Press, 2016.

\bibitem{SVRG}
R.~Johnson and T.~Zhang.
\newblock Accelerating stochastic gradient descent using predictive variance
  reduction.
\newblock In {\em Advances in Neural Information Processing Systems},
  volume~26. Curran Associates, Inc., 2013.

\bibitem{li2022revisiting}
T.~Li, T.~Xiao, and G.~Yang.
\newblock Revisiting the central limit theorems for the sgd-type methods.
\newblock {\em arXiv preprint arXiv:2207.11755}, 2022.

\bibitem{SGDB}
Y.~Liu, Y.~Gao, and W.~Yin.
\newblock An improved analysis of stochastic gradient descent with momentum.
\newblock In {\em Advances in Neural Information Processing Systems},
  volume~33, pages 18261--18271. Curran Associates, Inc., 2020.

\bibitem{SGDASGD}
E.~Moulines and F.~Bach.
\newblock Non-asymptotic analysis of stochastic approximation algorithms for
  machine learning.
\newblock In {\em Advances in Neural Information Processing Systems},
  volume~24. Curran Associates, Inc., 2011.

\bibitem{1983A}
Y.~E. Nesterov.
\newblock A method of solving a convex programming problem with convergence
  rate $o(1/k^2)$.
\newblock In {\em Doklady Akademii Nauk}, volume 269, pages 543--547. Russian
  Academy of Sciences, 1983.

\bibitem{nguyen2018sgd}
L.~Nguyen, P.~H. Nguyen, M.~Dijk, P.~Richt{\'a}rik, K.~Scheinberg, and
  M.~Tak{\'a}c.
\newblock Sgd and hogwild! convergence without the bounded gradients
  assumption.
\newblock In {\em International Conference on Machine Learning}, pages
  3750--3758. Proceedings of Machine Learning Research, 2018.

\bibitem{SARAH}
L.~M. Nguyen, J.~Liu, K.~Scheinberg, and M.~Tak{\'a}{\v{c}}.
\newblock {SARAH}: A novel method for machine learning problems using
  stochastic recursive gradient.
\newblock In {\em Proceedings of the 34th International Conference on Machine
  Learning}, volume~70 of {\em Proceedings of Machine Learning Research}, pages
  2613--2621, 06--11 Aug 2017.

\bibitem{JMLR:v20:18-759}
L.~M. Nguyen, P.~H. Nguyen, P.~Richt{{\'a}}rik, K.~Scheinberg,
  M.~Tak{{\'a}}{\v{c}}, and M.~van Dijk.
\newblock New convergence aspects of stochastic gradient algorithms.
\newblock {\em Journal of Machine Learning Research}, 20(176):1--49, 2019.

\bibitem{2019Finite}
L.~M. Nguyen, M.~van Dijk, D.~Phan, P.~H. Nguyen, T.-W. Weng, and
  J.~Kalagnanam.
\newblock Finite-sum smooth optimization with sarah.
\newblock {\em Computational Optimization and Applications}, 82:561 -- 593,
  2019.

\bibitem{1964Some}
B.~T. Polyak.
\newblock Some methods of speeding up the convergence of iteration methods.
\newblock {\em Ussr Computational Mathematics \& Mathematical Physics},
  4(5):1--17, 1964.

\bibitem{ASGD}
B.~T. Polyak and A.~B. Juditsky.
\newblock Acceleration of stochastic approximation by averaging.
\newblock {\em SIAM Journal on Control and Optimization}, 30(4):838--855, 1992.

\bibitem{NCSGDP1}
S.~J. Reddi, A.~Hefny, S.~Sra, B.~Poczos, and A.~Smola.
\newblock Stochastic variance reduction for nonconvex optimization.
\newblock In {\em Proceedings of The 33rd International Conference on Machine
  Learning}, volume~48 of {\em Proceedings of Machine Learning Research}, pages
  314--323, New York, USA, 20--22 Jun 2016.

\bibitem{ASSGD}
O.~Sebbouh, R.~M. Gower, and A.~Defazio.
\newblock Almost sure convergence rates for stochastic gradient descent and
  stochastic heavy ball.
\newblock In {\em Proceedings of Thirty Fourth Conference on Learning Theory},
  volume 134 of {\em Proceedings of Machine Learning Research}, pages
  3935--3971, 15--19 Aug 2021.

\bibitem{sirignano2019}
J.~Sirignano and K.~Spiliopoulos.
\newblock Stochastic gradient descent in continuous time: A central limit
  theorem.
\newblock {\em Stochastic Systems}, 10(2):124--151, 2020.

\bibitem{NAG1}
W.~Su, S.~Boyd, and E.~J. Cand{{\`e}}s.
\newblock A differential equation for modeling nesterov's accelerated gradient
  method: Theory and insights.
\newblock {\em Journal of Machine Learning Research}, 17(153):1--43, 2016.

\end{thebibliography}
\end{document}